\begin{document}
\def\a{{\alpha}}	\def\b{{\beta}}	\def\l{{\lambda}} 	\def\la{{\langle}}	
\def\pa{{\partial}} 	\def\ra{{\rangle}}	\def\To{{\longrightarrow}} 	 

\def\AA{{\mathbb A}}	\def\CC{{\mathbb C}} 	\def\FF{{\mathbb F}}
\def\HH{{\mathbb H}}
\def\KK{{\mathbb K}}	\def\NN{{\mathbb N}}	\def\PP{{\mathbb P}} 
\def\QQ{{\mathbb Q}}	\def\RR{{\mathbb R}}  	\def\ZZ{{\mathbb Z}}

\def\charr{\operatorname {char}}\def\codim{\operatorname {codim}}	
\def\GL{\operatorname {GL}}
\def\Hom{\operatorname {Hom}}
\def\Im{\operatorname {Im}}	\def\Ind{\operatorname {Ind}}
\def\Ker{\operatorname {Ker}}  	
\def\rad{\operatorname {rad}}	\def\rank{\operatorname {rank}}   
\def\reg{\operatorname {reg}}	\def\Rep{\operatorname {Rep}} 	   
\def\Res{\operatorname {Res}} 	      
\def\Ver{\operatorname {Ver}} 
\def\Sym{\operatorname {Sym}}   
\def\Tor{\operatorname {Tor}} 	

\newtheorem{Theorem}{Theorem}[section]
\newtheorem{Corollary}[Theorem]{Corollary}
\newtheorem{Lemma}[Theorem]{Lemma}
\newtheorem{Proposition}[Theorem]{Proposition}
\newtheorem{Question}[Theorem]{Question}
\newtheorem{Remark}[Theorem]{Remark}
\newtheorem{Conjecture}{Conjecture}
\newtheorem{df}[Conjecture]{Definition}
\numberwithin{equation}{section}

\title{$N_6$ property for third Veronese embeddings}  
\author{Thanh Vu}
\address{Department of Mathematics, University of California at Berkeley, Berkeley, CA 94720}
\email{vqthanh@math.berkeley.edu}
\subjclass[2010]{Primary 13D02, 14M12, 05E10}
\keywords{Syzygies, Veronese varieties, matching complexes.}
\date{February 28, 2013}

\begin{abstract} The rational homology groups of the matching complexes are closely related to the syzygies of the Veronese embeddings. In this paper we will prove the vanishing of certain rational homology groups of matching complexes, thus proving that the third Veronese embeddings satisfy the property $N_6$. This settles the Ottaviani-Paoletti conjecture for third Veronese embeddings. This result is optimal since $\nu_3(\PP^n)$ does not satisfy the property $N_7$ for $n\ge 2$ as shown by Ottaviani-Paoletti in \cite{OP}.
\end{abstract}
\maketitle

\section{Introduction}
Let $k$ be a field of characteristic $0$. Let $V$ be a finite dimensional vector space over $k$ of dimension $n+1$. The projective space $\PP(V)$ has coordinate ring naturally isomorphic to $\Sym V$. For each natural number $d$, the $d$-th Veronese embedding of $\PP(V)$, which is naturally embedded into the projective space $\PP(\Sym^dV)$ has coordinate ring $\Ver(V,d) = \oplus_{k=0}^\infty \Sym^{kd} V$. For each set of integers $p, q, b$, let $K^d_{p,q}(V,b)$ be the associated Koszul cohomology group defined as the homology of the $3$ term complex
\begin{align*}
\bigwedge^{p+1} \Sym^dV & \otimes \Sym^{(q-1)d+b}V \to \bigwedge^p \Sym^dV \otimes \Sym^{qd + b} V \\
&\to \bigwedge^{p-1}\Sym^dV \otimes \Sym^{(q-1)d+b}V.
\end{align*}
Then $K_{p,q}^d(V,b)$ is the space of minimal $p$-th syzygies of degree $p+q$ of the $\GL(V)$ module $\oplus_{k=0}^\infty \Sym^{kd +b}V.$ We write $K_{p,q}^d(b): {\operatorname{Vect}} \to {\operatorname{Vect}}$ for the functor on finite dimensional $k$-vector spaces that assigns to a vector space $V$ the corresponding syzygy module $K_{p,q}^d(V,b)$. 

\begin{Conjecture}[Ottaviani-Paoletti] $$K_{p,q}^d(V,0) = 0 \text{ for } q \ge 2  \text{ and } p \le 3d - 3.$$ 
\end{Conjecture}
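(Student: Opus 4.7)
The plan follows the strategy hinted at in the abstract and generalizes it from the $d=3$ case to arbitrary $d$. In characteristic zero, $K^d_{p,q}(V,0)$ is a polynomial functor in $V$, so by Schur--Weyl duality it suffices to understand the $\mathfrak{S}_{qd}$-representation that controls its decomposition into Schur functors $S_\lambda(V)$. Standard manipulations of the Koszul complex---using plethysm to expand $\bigwedge^p \Sym^d V$ together with Pieri/Cauchy identities for $\Sym^{qd}V$---identify these multiplicities with the reduced rational homology in degree $p-1$ of the $d$-uniform matching complex $M^d(qd)$, whose vertices are the $d$-element subsets of $\{1,\dots,qd\}$ and whose faces are pairwise disjoint collections of such subsets. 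The conjecture thus reduces to proving $\tilde H_{p-1}(M^d(qd); \QQ) = 0$ as an $\mathfrak{S}_{qd}$-module for all $q \ge 2$ and $p \le 3d-3$.

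I would first treat the critical case $q=2$, where $M^d(2d)$ is a low-dimensional, highly symmetric object (vertices correspond to unordered pairs of complementary $d$-sets in $[2d]$), making its homology amenable to direct computation via an $\mathfrak{S}_{2d}$-equivariant CW-decomposition. For $q \ge 3$ the plan is to reduce to smaller $q$ by peeling off a vertex $v$ and analyzing the link--deletion long exact sequence, since the link of $v$ in $M^d(qd)$ is itself an instance of a matching complex of the form $M^d(qd-d)$ after relabeling. Iterating this cofibre sequence should reduce the general $q$ to the base case $q=2$ at the cost of careful bookkeeping of dimensions.

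The main technical engines are (a) discrete Morse theory on the face poset of $M^d(qd)$, where one seeks an $\mathfrak{S}_{qd}$-equivariant acyclic matching whose critical cells lie in dimensions greater than $p-1$ whenever $p \le 3d-3$; and (b) induction on $d$, with base cases $d=2$ (classical, due to Green) and $d=3$ (the present paper). The inductive step would compare $M^d(qd)$ to $M^{d-1}(qd-q)$ via a spectral sequence associated to a filtration by vertex stars, or via a standard long exact sequence relating matching complexes of different uniformities. Throughout, maintaining the $\mathfrak{S}_{qd}$-equivariance is essential, since $K^d_{p,q}(V,0)=0$ is not a statement about a single isotypic component but about the entire $\GL(V)$-module.

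The hard part, in my view, is twofold. First, matching complexes are notoriously badly behaved topologically; their homology does not vanish in a uniformly clean range, and the sharpness of the bound $p=3d-3$---forced by the Ottaviani--Paoletti counterexample at $p=3d-2$ on $\PP^2$---demands an argument sensitive to the exact arithmetic rather than a soft connectivity estimate. Second, any induction on $d$ must gain exactly three in $p$ at each step, which rules out naive long exact sequence chases that lose a constant per induction. Designing a combinatorial Morse matching (or equivalently, a quasi-isomorphism onto a suitable subcomplex) that is simultaneously equivariant and sharp in both $p$ and $d$ is where I expect essentially all of the difficulty to reside.
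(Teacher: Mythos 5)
There is a genuine gap: what you have written is a research plan, not a proof, and the statement you are addressing is precisely the part of the paper that remains a conjecture. The paper itself proves only the case $d=3$ (Theorem \ref{main}), and it does so not by any uniform combinatorial argument but by the Karaguezian--Reiner--Wachs correspondence (Theorem \ref{trans}) combined with Raicu's equivariant long exact sequence (\ref{equiv}), the Cohen--Macaulay dualities of Propositions \ref{duality2} and \ref{duality3}, and explicit machine computation (including the dimensions of $K_{6,0}(V,2)$ and $K_{7,0}(V,2)$ for $\dim V=4$) to pin down $\tilde H_*(C^3_n)$ for every $n\le 24$. Your two ``technical engines'' --- an $S_N$-equivariant acyclic matching whose critical cells avoid dimension $p-1$ in the sharp range, and an induction on $d$ that gains exactly three in $p$ per step --- are exactly the missing mathematical content; you acknowledge yourself that all the difficulty resides there, and no such construction is supplied or known. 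A proposal whose decisive steps are ``one seeks'' and ``should reduce'' does not establish $K^d_{p,q}(V,0)=0$ for any new case, and in particular cannot be accepted as a proof of the general conjecture, which is open for $d\ge 4$.

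There is also a concrete error in your reduction step. By Theorem \ref{trans}, for $b=0$ the multiplicity of $S_\lambda$ in $K^d_{p,q}(0)$, with $\lambda$ a partition of $N=(p+q)d$, equals the multiplicity of $V^\lambda$ in $\tilde H_{p-1}(C^d_N)$ where $N=(p+q)d$, not $qd$. The conjecture is therefore equivalent to the vanishing of $\tilde H_i(C^d_{nd})$ for $i\ne n-2$ and $n=p+q\le 3d-1$ (Conjecture 2 of the paper), so the complexes to be controlled grow with $p$; your ``critical case $q=2$'' with the one-dimensional complex on $2d$ letters rests on this mis-indexing, and the genuinely hard complexes in the $d=3$ case are the large ones ($C^3_{20}$, $C^3_{23}$, $C^3_{24}$), where a vertex-deletion long exact sequence alone does not determine the answer and the paper must invoke duality and dimension counts. (Minor: the $d=2$ case is due to Jozefiak--Pragacz--Weyman; Green's result is the case $\dim V=2$.) The only portion of your proposal that matches the paper is the translation from syzygies to rational homology of matching complexes; everything after that point would need to be constructed before this could be called a proof.
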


The conjecture is known for $d = 2$ by the work of Josefiak-Pragacz-Weyman \cite{JPW} and also known for $\dim V = 2$ and $\dim V = 3$ by the work of Green \cite{G} and Birkenhake \cite{B}. All other cases are open. Also, Ottaviani and Paoletti in \cite{OP} showed that $K_{p,2}^d(V,0) \neq 0$ for $p = 3d - 2$, when $\dim V \ge 3$, $d\ge 3$. In other words, the conjecture is sharp. Recently, Bruns, Conca, and R\"omer in \cite{BCR} showed that $K_{d+1,q}^d(V,0) = 0$ for $q \ge 2$. In this paper we will prove the conjecture in the case $d = 3$. Thus the main theorem of the paper is:

\noindent 
{\bf Theorem \ref{main}.} The third Veronese embeddings of projective spaces satisfy property $N_6$.

To prove Theorem \ref{main} we prove that vanishing results hold for certain rational homology groups of matching complexes (defined below), and then use \cite[Theorem~5.3]{KRW} to translate between the zyzygy modules $K_{p,q}^d(V,b)$ and the homology groups of matching complexes.

\begin{df}[Matching Complexes] Let $d > 1$ be a positive integer and $A$ a finite set. The matching complex $C^d_A$ is the simplicial complex whose vertices are all the $d$-element subsets of $A$ and whose faces are $\{A_1,..., A_r\}$ so that $A_1, ..., A_r$ are mutually disjoint. 
\end{df}

The symmetric group $S_A$ acts on $C^d_A$ by permuting the elements of $A$ making the homology groups of $C^d_A$ representations of $S_A$. For each partition $\l$, we denote by $V^\l$ the irreducible representation of $S_{|\l|}$ corresponding to the partition $\l$, and $S_\l$ the Schur functor corresponding to the partition $\l$. For each vector space $V$, $S_\l(V)$ is an irreducible representation of $\GL(V)$. The relation between the syzygies of the Veronese embeddings and the homologies of matching complexes is given by the following theorem of Karaguezian, Reiner and Wachs.

\begin{Theorem}\cite[Theorem~5.3]{KRW}\label{trans} Let $p,q$ be non-negative integers, let $d$ be a positive integer and let $b$ be a non-negative integer. Write $N = (p+q)d + b$. Consider a partition $\l$ of $N$. Then the multiplicity of $S_\l$ in $K_{p,q}^d(b)$ coincides with the multiplicity of the irreducible $S_N$ representation $V^\l$ in $\tilde H_{p-1}(C^d_N)$.
\end{Theorem}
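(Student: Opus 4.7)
The plan is to prove the theorem via Schur--Weyl duality: realize the entire Koszul complex computing $K_{p,q}^d(V,b)$ as the image, under a covariant exact functor, of the reduced chain complex of the matching complex $C^d_N$.

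Fix $N=(p+q)d+b$. By Schur--Weyl duality in characteristic zero,
$$V^{\otimes N} \cong \bigoplus_{\mu\vdash N} S_\mu(V)\otimes V^\mu$$
as a $(\GL(V)\times S_N)$-bimodule. Consequently, for any subgroup $H\subseteq S_N$ with one-dimensional character $\chi$, the $\chi$-isotypic subspace $(V^{\otimes N})^{H,\chi}$ is a $\GL(V)$-submodule whose multiplicity of $S_\mu(V)$ equals $\dim\Hom_{H}(\chi,V^\mu)=\dim\Hom_{S_N}(\Ind_H^{S_N}\chi,V^\mu)$ by Frobenius reciprocity, i.e.\ the multiplicity of $V^\mu$ in $\Ind_H^{S_N}\chi$ (using self-duality of irreducible $S_N$-representations). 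Decompose $[N]$ into $p$ blocks of size $d$ plus one block of size $qd+b$, and let $H_p=(S_d\wr S_p)\times S_{qd+b}\subset S_N$ be the block stabilizer, with $\chi_p$ the character of $H_p$ that is sign on the outer $S_p$ and trivial on all other factors. Then the $p$-th Koszul term $\bigwedge^p\Sym^dV\otimes\Sym^{qd+b}V$ sits in $V^{\otimes N}$ as precisely the $(H_p,\chi_p)$-isotypic subspace, so its $S_\mu(V)$-multiplicity is the $V^\mu$-multiplicity of $\Ind_{H_p}^{S_N}\chi_p$.

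The next step is to identify $\Ind_{H_p}^{S_N}\chi_p$ with the reduced chain group $\tilde C_{p-1}(C^d_N;\QQ)$ and the Koszul differential with the simplicial boundary. A transversal for $H_p$ in $S_N$ is parametrized by unordered $p$-tuples of pairwise disjoint $d$-subsets of $[N]$; the sign twist in $\chi_p$ then equips this basis with the usual simplicial orientation. Tracking the Koszul differential
$$w_1\wedge\cdots\wedge w_p\otimes f\longmapsto\sum_{i=1}^p(-1)^{i-1}\,w_1\wedge\cdots\widehat{w_i}\cdots\wedge w_p\otimes(w_if)$$
through Frobenius reciprocity, it corresponds on the $S_N$-side to the alternating-sum face map $\{A_1,\dots,A_p\}\mapsto\sum_i(-1)^{i-1}\{A_1,\dots,\widehat{A_i},\dots,A_p\}$, which is exactly the simplicial boundary of $C^d_N$. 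Since extracting isotypic components is exact in characteristic zero, taking homology commutes with the translation, yielding
$$\text{mult}(S_\mu(V),\,K_{p,q}^d(V,b))=\text{mult}(V^\mu,\,\tilde H_{p-1}(C^d_N;\QQ)).$$

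The main obstacle is the careful bookkeeping in the second paragraph: one needs to verify both that the sign character $\chi_p$ on the outer $S_p$ reproduces the standard simplicial orientation on $\tilde C_{p-1}(C^d_N;\QQ)$, and that the ``merge one wedge block into the symmetric block'' operation in the Koszul differential, after Frobenius reciprocity, yields the alternating-sum face map with correct signs. Both checks are elementary but delicate; once in hand, Schur--Weyl formalism assembles the theorem directly.
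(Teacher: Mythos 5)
Your argument is correct and is essentially the proof of the cited result: the paper gives no proof of Theorem \ref{trans} itself, quoting it from \cite{KRW}, and the argument there is exactly this Schur--Weyl translation (phrased via the $(1,\dots,1)$-weight space of the degree-$N$ strand of the Koszul complex, which is the same thing as the $(H_p,\chi_p)$-isotypic description you use). The bookkeeping you flag does check out: taking $V=k^N$ with monomial basis, the multilinear weight space of $\bigwedge^p\Sym^dV\otimes\Sym^{qd+b}V$ has basis $(e_{A_1}\wedge\cdots\wedge e_{A_p})\otimes e_C$ indexed by oriented faces $\{A_1,\dots,A_p\}$ of $C^d_N$ (with $C$ the complement), the Koszul differential visibly becomes the simplicial boundary with the stated signs, and exactness of weight-space (isotypic) extraction in characteristic zero completes the identification of multiplicities in homology.
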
 
This correspondence makes Conjecture $1$ equivalent to the following conjecture.

\begin{Conjecture} The only non-zero homology groups of $C^d_{nd}$ for $n = 1, ..., 3d - 1$ is $\tilde H_{n-2}$.
\end{Conjecture}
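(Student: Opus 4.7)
The plan is to prove the conjecture, at least in the case $d=3$ which the paper aims to settle, by combining Theorem \ref{trans} with the $S_N$-equivariant structure on the rational homology of $C^d_{nd}$. Via Theorem \ref{trans}, the conjecture is equivalent to the statement that $K^d_{p,q}(V,0)=0$ for all $q\ge 2$ and $p\le 3d-3$; concretely, for each partition $\l\vdash nd$ and each degree $i\ne n-2$ one must rule out the appearance of $V^\l$ in $\tilde H_i(C^d_{nd};\QQ)$. This reformulation is what makes the problem attackable, since multiplicities of irreducibles transport cleanly across the two sides.

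My first move would be to handle the very small cases $n\le 2$ directly: $C^d_d$ is a single vertex, and $C^d_{2d}$ is a $1$-dimensional complex pairing complementary $d$-subsets, so the only nontrivial homology lies in degree $n-2$ by inspection. For the inductive step on $n$, I would filter $C^d_{nd}$ using a distinguished vertex $A_0=\{1,\ldots,d\}$: the link of $A_0$ is canonically $C^d_{(n-1)d}$ on the complementary vertex set, so the inductive hypothesis controls its homology, and the deletion of $A_0$ can be analyzed via a secondary filtration by how many of $\{1,\ldots,d\}$ are matched. Equivariantly this corresponds to restriction along $S_{nd}\downarrow S_{(n-1)d}\times S_d$, and Pieri-type rules let one pin down which irreducibles $V^\l$ can contribute in each degree of the resulting long exact/spectral sequence.

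The main obstacle I expect is controlling the top-degree homology $\tilde H_{n-1}(C^d_{nd})$, where perfect matchings live, together with the middle degrees just below $n-2$, where several branches of the long exact sequence could a priori contribute terms surviving to $E_\infty$. For $d=3$ and $n\le 8$ the relevant partitions $\l\vdash 3n$ have bounded size, so one can hope to close the case analysis by combining the equivariant long exact sequence with Schur-multiplicity bookkeeping and possibly explicit verification for the smallest $n$; however, partitions with very short rows or columns, where classical Koszul-type vanishing arguments give the least information, will likely require some additional ad hoc input, and organizing the inductive step so that it closes uniformly across the range $n\le 3d-1$ is the real technical challenge.
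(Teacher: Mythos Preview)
Your outline has the right general shape---induction on $N$ via an equivariant long exact sequence, with multiplicities of $V^\l$ tracked throughout---but it is missing the two ingredients that actually make the argument close in the paper, and without them your plan will stall.

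First, the filtration you propose (link/deletion at the simplicial vertex $A_0=\{1,\dots,d\}$) is not the one the paper uses, and your version is harder to control: the deletion of $A_0$ is not a smaller matching complex, and the ``secondary filtration by how many of $\{1,\dots,d\}$ are matched'' produces several graded pieces whose equivariant homology you have no a priori handle on. The paper instead fixes a single \emph{element} $a\in A$ and obtains the exact sequence
\[
\cdots\to \Ind_{S_C\times S_\b}^{S_B}(\tilde H_r(C^d_C)\otimes 1)\to \tilde H_r(C^d_B)\to \Res^{S_B}_{S_A}\tilde H_r(C^d_A)\to\cdots,
\]
with $|B|=|A|-1$ and $|C|=|A|-d$. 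This forces you to compute $\tilde H_*(C^3_N)$ for \emph{every} $N\le 24$, not just the multiples of $3$; in particular the hardest step is $N=23$, not $N=24$.

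Second, and more seriously, the long exact sequence by itself only yields inequalities among multiplicities (a term is sandwiched between a sub- and a quotient representation), and these inequalities do \emph{not} determine the answer uniquely. The paper closes the system with two inputs you never mention: (i) the self-duality of the Cohen--Macaulay modules $\bigoplus_k\Sym^{kd+b}V$, which for $\dim V=2$ and $\dim V=3$ gives Propositions~\ref{duality2} and~\ref{duality3} and pins down the ``at most $2$ rows'' and ``at most $3$ rows'' parts of $\tilde H_*(C^3_N)$ for free; and (ii) two explicit Macaulay2 computations of $\dim K_{p,0}(V,2)$ for $p=6,7$ with $\dim V=4$, needed to resolve the remaining ambiguity at $N=20$ and $N=23$. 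Your phrase ``some additional ad hoc input'' is exactly where the content is, and neither Pieri bookkeeping nor classical Koszul vanishing supplies it; the duality is a genuinely algebraic fact about the Veronese side that has no obvious combinatorial avatar on the matching complex.
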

We will prove this conjecture for $d = 3$ by computing the homology groups of $C^3_n$ by induction. To compute the homology groups of the matching complexes inductively, the following equivariant long-exact sequence introduced by Raicu in \cite{R} is useful. Let $A$ be a finite set with $|A| \ge 2d$. Let $a \in A$ be an element of $A$. Let $\a$ be a $d$-element subset of $A$ such that $a\in \a$. Let $\b = \a \setminus a$, and let $C = A \setminus \a$, $B = A \setminus \{a\}$. Then we have the following long-exact sequence of representations of $S_B$. 

\begin{align}
  \label{equiv}
  \begin{split}
    \cdots & \to \Ind_{S_C\times S_\b}^{S_B} (\tilde H_r(C^d_C)\otimes 1) \to \tilde H_r(C^d_B)\to \Res_{S_A}^{S_B}(\tilde H_r(C^d_A))\\
    &\to \Ind_{S_C \times S_\b}^{S_B} (\tilde H_{r-1}(C^d_C) \otimes 1) \to \cdots
  \end{split}
\end{align}

Morever, for each $b$, $0\le b\le d-1$, the $\GL(V)$ module $\oplus_{k=0}^\infty \Sym^{kd + b}V$ is a Cohen-Macaulay module in the coordinate ring of the projective space $\PP(\Sym^dV)$. The dual of the resolution of each of these modules is the resolution of another such module giving us the duality among the Koszul homology groups $K_{p,q}^d(V,b)$. For simplicity, since we deal with third Veronese embeddings, we will from now on assume that $d = 3$. To compute the homology groups of the matching complexes $C^3_n$ for $n \le 10$ we use the duality in the case $\dim V = 2$, and to compute the homology groups of the matching complexes $C^3_n$ for $n \ge 14$, we use the duality in the case $\dim V = 3$, so we will make them explicit here. In the case $\dim V  =2$, the canonical module of $\Ver(V,3)$ as representation of $\GL(V)$ is $S_\l(V)$ with $\l = (5,5)$. From \cite[Chapter~2]{W},
$$\Hom(S_{(a,b)},S_{(5,5)}) \cong S_{(5-b,5-a)}$$
and the correspondence in Theorem \ref{trans}, we have 
\begin{Proposition}\label{duality2} The multiplicity of $V^\l$ with $\l = (\l_1,\l_2)$ in the homology group $\tilde H_{p-1}(C^3_N)$ coincides with the multiplicity of $V^\mu$ with $\mu = (5-\l_2,5-\l_1)$ in the homology group $\tilde H_{1-p}(C^3_{10-N}).$
\end{Proposition}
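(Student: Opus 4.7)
My approach combines equivariant Cohen--Macaulay duality for the twisted Veronese modules with the correspondence of Theorem \ref{trans}.

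Fix $b\in\{0,1,2\}$. Since $\dim V = 2$ gives $\dim\Sym^3V = 4$ while $\Ver(V,3)$ has Krull dimension $2$, the module $M_b := \bigoplus_{k\ge 0}\Sym^{3k+b}V$ is Cohen--Macaulay of codimension $2$ over the polynomial ring $S := \Sym(\Sym^3V)$, so its minimal $S$-free resolution has length $2$. Dualizing that resolution with $\Hom_S(-,\omega_S)$ and reversing the homological direction yields a minimal free resolution of the canonical module $\omega_{M_b} := \operatorname{Ext}^2_S(M_b,\omega_S)$. At the level of $\GL(V)$-equivariant Betti numbers this produces a perfect pairing
\[
K^3_{p,q}(V,b) \otimes K^3_{2-p,q'}(V,b') \To S_{(5,5)}V,
\]
where the shift $p\mapsto 2-p$ matches the codimension and the pair $(b',q')$ is determined from $(b,q)$ by routine degree bookkeeping.

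Two $\GL(V)$-equivariant contributions combine to produce the target $S_{(5,5)}V \cong (\det V)^{\otimes 5}$: the canonical module of $S$ carries the twist $\omega_S \cong S(-4)\otimes\det(\Sym^3V)$ with $\det(\Sym^3V)\cong (\det V)^{\otimes 6}$, while equivariant Serre duality on $\PP(V)$ (using $\omega_{\PP(V)}\cong (\det V)\otimes\mathcal{O}(-2)$ from the Euler sequence) puts a $\det V$ factor into $\omega_{M_b}$; the net equivariant factor relating $K^3_{p,q}(V,b)^*$ to $K^3_{2-p,q'}(V,b')$ is $(\det V)^{6}\otimes(\det V)^{-1} = (\det V)^{\otimes 5} = S_{(5,5)}V$. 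Feeding the pairing through the Schur-functor identity $\Hom(S_{(a,b)}V,S_{(5,5)}V) \cong S_{(5-b,5-a)}V$ from \cite[Chapter~2]{W} then converts it into the multiplicity equality
\[
[S_\l V : K^3_{p,q}(V,b)] = [S_{(5-\l_2,\,5-\l_1)}V : K^3_{2-p,q'}(V,b')].
\]

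Finally, Theorem \ref{trans} translates each side into multiplicities of $V^\l$ in $\tilde H_{p-1}(C^3_N)$ and of $V^{(5-\l_2,\,5-\l_1)}$ in $\tilde H_{(2-p)-1}(C^3_{N'}) = \tilde H_{1-p}(C^3_{N'})$, and counting boxes yields $N' = |\mu| = 10 - |\l| = 10 - N$, exactly the claim. The step I expect to require the most care is tracking the various $\det V$ twists---in $\omega_S$, in the equivariant $\omega_{\PP(V)}$, and in the external grading shift identifying $\omega_{M_b}$ with some $M_{b'}$---so that all contributions combine cleanly to the advertised $(\det V)^{\otimes 5}$; once that calibration is in hand, the rest is a routine consequence of Cohen--Macaulay duality together with the quoted $\Hom$ identity.
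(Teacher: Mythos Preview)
Your proposal is correct and follows essentially the same approach as the paper: the paper's argument, given in the paragraph immediately preceding the statement, is precisely Cohen--Macaulay duality for the modules $M_b$ over $\Sym(\Sym^3V)$, identification of the equivariant canonical twist as $S_{(5,5)}V$, the Schur-functor identity $\Hom(S_{(a,b)},S_{(5,5)})\cong S_{(5-b,5-a)}$ from \cite[Chapter~2]{W}, and finally the translation via Theorem~\ref{trans}. You have supplied more detail on how the exponent $5$ arises from $\det(\Sym^3V)\cong(\det V)^{\otimes 6}$ together with the $\omega_{\PP^1}$ contribution, and you are right that calibrating those $\det V$ twists is the only place requiring care; once that is done the rest is, as you say, routine.
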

Similarly, in the case $\dim V = 3$, the canonical module of $\Ver (V,3)$ as representation of $\GL(V)$ is $S_\l(V)$ with $\l = (9,9,9)$. From \cite[Chapter~2]{W},
$$\Hom(S_{(a,b,c)},S_{(9,9,9)}) \cong S_{(9-c,9-b,9-a)}$$
and the correspondence in Theorem \ref{trans}, we have 
\begin{Proposition}\label{duality3} The multiplicity of $V^\l$ with $\l = (\l_1,\l_2,\l_3)$ in the homology group $\tilde H_{p-1}(C^3_{N})$ coincides with the multiplicity of $V^\mu$ with $\mu = (9-\l_3,9-\l_2,9-\l_1)$ in the homology group $\tilde H_{6-p}(C^3_{27-N}).$
\end{Proposition}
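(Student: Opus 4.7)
My approach mirrors the derivation of Proposition \ref{duality2}, adapted to $\dim V = 3$. The three ingredients are Theorem \ref{trans}, the Cohen-Macaulay property of the modules $R_b = \bigoplus_{k\ge 0}\Sym^{3k+b}V$ over $A=\Sym(\Sym^3V)$, and the Schur-functor duality formula $\Hom(S_{(a,b,c)},S_{(9,9,9)})\cong S_{(9-c,9-b,9-a)}$ recorded just above the statement.

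By Theorem \ref{trans} applied with $\dim V=3$, the multiplicity of $V^\l$ in $\tilde H_{p-1}(C^3_N)$ (for $\l$ a partition of $N$ with at most three parts) equals the multiplicity of the Schur functor $S_\l(V)$ in the Koszul cohomology group $K^3_{p,q}(V,b)$, where $N=3(p+q)+b$ and $b\in\{0,1,2\}$. Similarly, $\tilde H_{6-p}(C^3_{27-N})$ records multiplicities of $S_\mu(V)$ in $K^3_{7-p,q'}(V,b')$ for appropriate $q',b'$ determined by $27-N = 3(7-p+q')+b'$. It therefore suffices to establish the corresponding equality of Koszul-cohomology multiplicities.

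For this I would apply Cohen-Macaulay duality. Since $R_b$ has projective dimension $c = 10-3 = 7$ over $A$, dualizing its minimal $\GL(V)$-equivariant graded free resolution $F_\bullet$ via $\Hom_A(-,A)$ and reindexing yields the minimal graded free resolution of $\operatorname{Ext}_A^7(R_b,A)$. A sheaf-theoretic computation on $\PP(V)=\PP^2$ using $\omega_{\PP^2}=\mathcal{O}(-3)$ identifies this Ext-module, up to the canonical twist coming from $\omega_A$ and an $A$-degree shift, with a copy of $R_{b^*}$ tensored with the one-dimensional representation $S_{(9,9,9)}(V)=(\det V)^9$, where the index pairing is $0\leftrightarrow 0$, $1\leftrightarrow 2$, $2\leftrightarrow 1$. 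The homological flip $p \leftrightarrow 7-p$ then translates into the matching-complex index $p-1 \leftrightarrow 6-p$, and the polynomial-degree flip yields $N\leftrightarrow 27-N$. On the representation-theoretic side, each Schur summand $S_\l(V)$ in $F_p$ is replaced in the dualized resolution by $\Hom(S_\l,S_{(9,9,9)})=S_{(9-\l_3,9-\l_2,9-\l_1)}=S_\mu(V)$. Applying Theorem \ref{trans} once more to the dualized Koszul cohomology group yields the claimed identity.

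The principal obstacle is the careful bookkeeping of grading shifts and $\GL(V)$-twists: verifying which $R_{b^*}$ pairs with each $R_b$, and checking that the $A$-degree shift together with the $(\det V)^9$ twist produces exactly the correspondence $N\mapsto 27-N$ together with $\l\mapsto(9-\l_3,9-\l_2,9-\l_1)$. The computation is routine, following the pattern already set in the proof of Proposition \ref{duality2}, but must be carried out with care.
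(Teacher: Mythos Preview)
Your proposal is correct and follows essentially the same approach as the paper: the paper's argument (contained in the paragraph immediately preceding the statement) consists precisely of identifying the canonical module of $\Ver(V,3)$ as $S_{(9,9,9)}(V)$ when $\dim V=3$, invoking the Schur-functor duality $\Hom(S_{(a,b,c)},S_{(9,9,9)})\cong S_{(9-c,9-b,9-a)}$, and applying Theorem~\ref{trans}. You have simply unpacked the Cohen--Macaulay duality step (the codimension-$7$ flip $p\leftrightarrow 7-p$, the pairing $b\leftrightarrow b^*$, and the degree shift $N\leftrightarrow 27-N$) in more detail than the paper, which treats it by reference to the parallel case of Proposition~\ref{duality2}.
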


Finally, to determine the homology groups of the matching complex $C^3_N$, we apply the equivariant long exact sequence (\ref{equiv}) to derive equalities and inequalities for the multiplicities of the possible irreducible representations in the unknown homology groups of $C^3_N$. This is carried out with the help of our Macaulay2 package MatchingComplex.m2 that we will explain in the appendix. We then use Maple to solve this system of equalities and inequalities. 

The paper is organized as follows. In the second section, using the equivariant long exact sequence (\ref{equiv}) and the duality in Proposition \ref{duality2} we compute homology groups of matching complexes $C^3_n$ for $n \le 13$. In the third section, using the results in the second section and the duality in Proposition \ref{duality3} we derive all irreducible representations whose corresponding partitions have at most $3$ rows in the homology groups of $C^3_n$ for $n \ge 14$. The systems of equalities and inequalities from exact sequences obtained by applying the equivariant long exact sequence (\ref{equiv}) with $|A| = 20$ and $|A| = 23$ will not determine $\tilde H_4(C^3_{20})$ and $\tilde H_5(C^3_{23})$ uniquely. Thus we need to compute the dimensions of $K_{p,1}(V,2)$ with $p = 5, 6$ and $\dim V = 4$. This is done by using Macaulay2 to compute the dimensions of the spaces of minimal $p$-th syzygies of degree $p+1$ for $p = 6, 7$ of the module $\oplus_{k=0}^\infty \Sym^{3k + 2}V$ with $\dim V = 4$. We then state the results for the homology groups of $C^3_n$ with $14\le n \le 24$ and finish the proof of our main theorem. The proof of Proposition \ref{C23} illustrates the computation of the homology groups of the matching complexes $C^3_n$ dealing with the most complicated matching complex $C^3_{23}$ in the series. In the appendix we explain the ideas behind our package leading to the computation of the homology groups of the matching complexes.

\section{Homology of matching complexes}\label{Sec1}

In this section, using the equivariant long exact sequence (\ref{equiv}) we compute the homology groups of the matching complexes $C^3_n$ for $n \le 13$. In the following, we denote the partition $\l$ with row lengths $\l_1 \ge \l_2 \ge ... \ge \l_k \ge 0$ by the sequence $(\l_1, \l_2,...,\l_k)$ and we use the same notation for the representation $V^\l$. To simplify notation, we omit the subscript and superscript when we use the operators $\Ind$ and $\Res$. It is clear from the context and the equivariant long exact sequence what the induction and restriction are. From the definition of the matching complexes, it is not hard to see the following. 

\begin{Proposition}\label{Prop1.1} The only non-vanishing homology groups of $C^3_n$ with $n = 4, 5, 6$ are respectively
$$\tilde H_0 C^3_4 = (3,1),\;\; \tilde H_0 C^3_5 = (4,1) \oplus (3,2), \;\; \tilde H_0 C^3_6 = (4,2).$$
\end{Proposition}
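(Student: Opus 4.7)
The plan is to identify each matching complex $C^3_n$ explicitly as a topological space and then decompose its unique non-vanishing reduced homology group as an $S_n$-representation.

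First, for $n\in\{4,5\}$, any two $3$-subsets of an $n$-element set necessarily intersect since $3+3>n$, so $C^3_n$ has no edges and reduces to the discrete set of its $\binom{n}{3}$ vertices. For $n=6$, a $3$-subset of $\{1,\dots,6\}$ is disjoint from exactly one other $3$-subset, namely its complement, so $C^3_6$ is a disjoint union of $10$ edges. In all three cases every connected component is contractible, which forces $\tilde H_i(C^3_n)=0$ for $i\ge 1$ and identifies $\tilde H_0(C^3_n)$ with the reduced permutation representation of $S_n$ on the set of connected components.

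It now remains to decompose three specific permutation representations. For $n=4$, the $4$ components correspond via complementation to the singletons of $\{1,\dots,4\}$, yielding the defining representation $(4)\oplus(3,1)$; subtracting the trivial summand gives $\tilde H_0(C^3_4)=(3,1)$. For $n=5$, complementation identifies $3$-subsets with $2$-subsets, and the permutation representation of $S_5$ on $2$-subsets decomposes by Young's rule as $(5)\oplus(4,1)\oplus(3,2)$, so $\tilde H_0(C^3_5)=(4,1)\oplus(3,2)$. For $n=6$, the $10$ components correspond to unordered bipartitions of $\{1,\dots,6\}$ into two blocks of size three; the associated permutation representation is $\Ind_{S_3\wr S_2}^{S_6}\mathbf{1}$, whose Frobenius characteristic is the plethysm $h_2[h_3]=s_{(6)}+s_{(4,2)}$, giving $(6)\oplus(4,2)$ and hence $\tilde H_0(C^3_6)=(4,2)$.

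The only step that is not immediate is the $n=6$ decomposition, but the representation is small enough that the plethysm identity can be replaced, if desired, by a direct computation of the $S_6$-character of the permutation action on the $10$ bipartitions at each of the eleven cycle types followed by taking inner products with the irreducible characters; any of these approaches renders the verification entirely routine.
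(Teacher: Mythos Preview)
Your argument is correct and complete. The paper itself offers no proof beyond the remark that the proposition ``is not hard to see'' from the definition of the matching complexes; your direct identification of each $C^3_n$ as a discrete set (for $n=4,5$) or a disjoint union of edges (for $n=6$) together with the decomposition of the resulting reduced permutation modules is exactly the routine verification the paper is tacitly invoking.
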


Together with Proposition \ref{duality2} we have 
\begin{Proposition}\label{2row} The only irreducible representations whose corresponding partitions have at most $2$ rows in the homology groups of the matching complexes $C^3_n$, $7\le n\le 10$ are 
$$(\tilde H_1C^3_8)_2 = (5,3),\;\; (\tilde H_1C_9^3)_2 = (5,4),\;\; (\tilde H_1C_{10}^3)_2 = (5,5)$$
where $(\tilde H_iC^3_n)_2$ denote the subrepresentation of $\tilde H_iC^3_n$ consisting of all irreducible representations of $\tilde H_iC^3_n$ whose corresponding partitions have at most $2$ rows.
\end{Proposition}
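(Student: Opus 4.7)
The plan is to apply the duality of Proposition~\ref{duality2} directly. For $7 \le N \le 10$ the complementary matching complex $C^3_{10-N}$ has $10 - N \in \{0,1,2,3\}$ points, so it is trivial to describe, and the duality transfers everything about 2-row irreducibles across.

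First I would verify the tiny cases. For $m \in \{0,1,2\}$, $C^3_m$ has no vertices (there are no 3-subsets of $[m]$), so it is the void complex, whose only nontrivial reduced homology is $\tilde H_{-1}(C^3_m) = V^{(m)}$, the trivial $S_m$-representation, corresponding to the (one-row) partition $(m)$. For $m = 3$, $C^3_3$ is a single vertex, hence contractible, with vanishing reduced homology in every degree.

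Now I apply Proposition~\ref{duality2}: the multiplicity of a 2-row $V^\lambda$ in $\tilde H_{p-1}(C^3_N)$ equals the multiplicity of $V^{(5-\lambda_2,\,5-\lambda_1)}$ in $\tilde H_{1-p}(C^3_{10-N})$, with the convention that $V^\mu = 0$ when $\mu$ is not a valid partition (i.e.\ when $\lambda_1 > 5$; this convention is consistent with the Schur functor identity $\Hom(S_{(a,b)}, S_{(5,5)}) \cong S_{(5-b,5-a)}$ from \cite[Chapter~2]{W} that underlies the duality). For $N = 7$ the dual complex $C^3_3$ has vanishing homology, so no 2-row partition occurs in any $\tilde H_*(C^3_7)$. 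For $N = 8, 9, 10$ only $\tilde H_{-1}(C^3_{10-N})$ is nonzero, which forces $1 - p = -1$, hence $p = 2$; the unique contributing $\mu = (10-N)$ then dualizes to $\lambda = (5,\, N-5)$, producing the three representations $(5,3), (5,4), (5,5)$ in $\tilde H_1(C^3_N)$ respectively.

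The only item requiring care is the convention on $\lambda$ with $\lambda_1 > 5$: here $\mu$ has a negative second entry and must be read as zero, which is what ensures the duality delivers a \emph{complete} inventory of 2-row irreducibles rather than just a partial one. Once this is set, the argument is essentially a bookkeeping translation, with no genuine combinatorial or homological obstacle.
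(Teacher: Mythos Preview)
Your proposal is correct and follows essentially the same route as the paper: the paper's proof is the one-line remark ``Together with Proposition~\ref{duality2} we have \ldots'', and you have simply spelled out what that entails --- applying the duality to the trivially computable complexes $C^3_m$ for $m=10-N\in\{0,1,2,3\}$, together with the observation that partitions $\lambda$ with $\lambda_1>5$ dualize to zero. There is no substantive difference in approach.
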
  

\begin{Proposition} The only non-vanishing homology group of $C_7^3$ is
$$\tilde H_1 C^3_7 = (5,1,1) \oplus (3,3,1).$$
\end{Proposition}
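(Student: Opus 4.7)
The plan is to apply the equivariant long exact sequence (\ref{equiv}) with $|A|=7$, giving $|B|=6$ and $|C|=4$. Since $C^3_7$ has dimension at most one (as $3\cdot 3>7$), only $\tilde H_0 C^3_7$ and $\tilde H_1 C^3_7$ can be nonzero. Proposition~\ref{Prop1.1} tells us that the only nonzero homology groups among $\tilde H_*C^3_4$ and $\tilde H_*C^3_6$ are $\tilde H_0 C^3_4=(3,1)$ and $\tilde H_0 C^3_6=(4,2)$, so (\ref{equiv}) collapses to the single four-term exact sequence
\begin{align*}
0 \to \Res\tilde H_1 C^3_7 \to \Ind_{S_C\times S_\b}^{S_B}\bigl((3,1)\otimes 1\bigr) \to (4,2) \to \Res\tilde H_0 C^3_7 \to 0.
\end{align*}
By the Pieri rule,
$$\Ind_{S_4\times S_2}^{S_6}\bigl((3,1)\otimes(2)\bigr)=(5,1)\oplus(4,2)\oplus(4,1,1)\oplus(3,3)\oplus(3,2,1).$$

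First I would show $\tilde H_0 C^3_7=0$: its restriction to $S_6$ is a quotient of the simple module $(4,2)$, hence either $0$ or $(4,2)$, but the branching rule shows that no irreducible $S_7$-representation restricts to the single irreducible $(4,2)$ (the only partitions of $7$ with a unique removable corner are $(7)$ and $(1^7)$, restricting to $(6)$ and $(1^6)$). Since restriction from $S_7$ to $S_6$ is faithful on nonzero representations, this forces $\tilde H_0 C^3_7=0$, and the map to $(4,2)$ in the exact sequence is therefore surjective. By Schur's lemma this map must be an isomorphism on the unique $(4,2)$-isotypic summand of the induced representation and zero on the four other simple summands, yielding
$$\Res_{S_6}^{S_7}\tilde H_1 C^3_7=(5,1)\oplus(4,1,1)\oplus(3,3)\oplus(3,2,1).$$

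To recover $\tilde H_1 C^3_7$ itself I would enumerate the partitions $\mu$ of $7$ whose branching restriction lies inside $\{(5,1),(4,1,1),(3,3),(3,2,1)\}$. A case-by-case check over the removable corners of each partition shows that only $\mu=(5,1,1)$, restricting to $(5,1)\oplus(4,1,1)$, and $\mu=(3,3,1)$, restricting to $(3,3)\oplus(3,2,1)$, qualify. Since $\tilde H_1 C^3_7$ is a genuine (non-virtual) representation whose restriction contains each of the four required simples with multiplicity one, the multiplicities of $(5,1,1)$ and $(3,3,1)$ in $\tilde H_1 C^3_7$ must both equal one, giving the claimed decomposition.

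The main obstacle is this last step: the restriction map $R(S_7)\to R(S_6)$ is not injective in general, so $\Res\tilde H_1 C^3_7$ does not a priori determine $\tilde H_1 C^3_7$. The argument works here only because the set of admissible $S_6$-simples is small enough to rule out all but two $S_7$-irreducibles; for the larger matching complexes treated later in the paper this positivity-plus-branching approach will need to be supplemented by the dualities in Propositions \ref{duality2} and \ref{duality3} and by explicit computer calculations.
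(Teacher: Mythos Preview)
Your proof is correct and follows the same overall strategy as the paper: apply the equivariant long exact sequence (\ref{equiv}) with $|A|=7$, reduce to the four-term sequence, and read off $\Res\tilde H_1 C^3_7$. The one substantive difference is in how the two identification steps are carried out. The paper invokes Proposition~\ref{duality2} (via Proposition~\ref{2row}) to rule out any two-row constituents in $\tilde H_0 C^3_7$ and $\tilde H_1 C^3_7$, which immediately kills $\tilde H_0$ and forces $(5,1,1)$ and $(3,3,1)$ from the presence of $(5,1)$ and $(3,3)$ in the restriction. You instead avoid the duality entirely and argue directly from the branching rule: no $S_7$-irreducible restricts to the single module $(4,2)$ (since $7$ is prime, the only rectangular partitions are $(7)$ and $(1^7)$), and a case check shows $(5,1,1)$ and $(3,3,1)$ are the only partitions of $7$ whose restriction is contained in $(5,1)\oplus(4,1,1)\oplus(3,3)\oplus(3,2,1)$. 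Your route is slightly more elementary for this small case; the paper's use of Proposition~\ref{2row} is more systematic and is the template that scales to the later computations, exactly as you anticipate in your closing remark.
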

\begin{proof} Applying the equivariant long exact sequence (\ref{equiv}) with $|A| = 7$ we have exact sequences 
$$0\to \Res \tilde H_i C_7^3 \to 0$$
for $i\neq 0,1$, and an exact sequence
$$0 \to \Res \tilde H_1 C_7^3 \to \Ind \tilde H_0 C_4^3 \to \tilde H_0 C_6^3 \to \Res \tilde H_0 C_7^3 \to 0.$$
Therefore, $\tilde H_i C_7^3 = 0$ for $i\neq 0, 1$. To show that $\tilde H_0C_7^3$ is zero, note that $\tilde H_0C_6^3$ maps surjectively onto $\Res \tilde H_0 C_7^3$. Since $\tilde H_0C_6^3 = (4,2)$ and by Proposition \ref{2row}, $\tilde H_0C_7^3$ does not contain any irreducible representations whose corresponding partitions have at most $2$ rows, it must be zero. Therefore, we know that $\Res \tilde H_1C_7^3$ as representation of $S_6$ is equal to 
$$\Ind \tilde H_0 C_4^3 - \tilde H_0 C_6^3 = (5,1) \oplus (4,1,1)\oplus (3,3)\oplus (3,2,1).$$
Moreover, by Proposition \ref{2row}, $\tilde H_1C_7^3$ does not contain any irreducible representations whose corresponding partitions have at most $2$ rows, thus $\tilde H_1C_7^3$ must consist $(5,1,1)$ and $(3,3,1)$ as its restriction contains $(5,1)$ and $(3,3)$. But the restriction of $(5,1,1) \oplus (3,3,1)$ is equal to $\Res \tilde H_1C_7^3$ so we have the desired conclusion.
\end{proof} 

\begin{Proposition} The only non-vanishing homology group of $C_8^3$ is 
$$\tilde H_1 C^3_8 = (6,1,1) \oplus (5,2,1) \oplus (5,3) \oplus (4,3,1) \oplus (3,3,2).$$
\end{Proposition}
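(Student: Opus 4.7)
The plan is to follow the template of the previous proposition: apply the equivariant long exact sequence \eqref{equiv} with $|A|=8$, determine $\Res \tilde H_1 C^3_8$ as an $S_7$-representation, and then reconstruct the $S_8$-structure of $\tilde H_1 C^3_8$ using the two-row constraint of Proposition~\ref{2row} together with the branching rule from $S_8$ to $S_7$.

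Specializing \eqref{equiv} to $|A|=8$ gives $|B|=7$ and $|C|=5$. Since $\tilde H_r C^3_5=0$ for $r\neq 0$ and the previous proposition yields $\tilde H_r C^3_7=0$ for $r\neq 1$, the sequence forces $\Res \tilde H_r C^3_8=0$, hence $\tilde H_r C^3_8 = 0$, for all $r\neq 0,1$. The surviving portion is
$$0 \to \tilde H_1 C^3_7 \to \Res \tilde H_1 C^3_8 \to \Ind(\tilde H_0 C^3_5 \otimes 1) \to 0 \to \Res \tilde H_0 C^3_8 \to 0,$$
so that $\tilde H_0 C^3_8=0$ and, in the Grothendieck group of $S_7$-representations,
$$\Res \tilde H_1 C^3_8 = \tilde H_1 C^3_7 + \Ind(\tilde H_0 C^3_5\otimes 1).$$
Expanding $\tilde H_0 C^3_5 = (4,1) \oplus (3,2)$ and using Pieri's rule to induce from $S_5 \times S_2$ to $S_7$, this evaluates to
$$(6,1) + 2(5,2) + 2(5,1,1) + 2(4,3) + 2(4,2,1) + 2(3,3,1) + (3,2,2).$$

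To recover the $S_8$-structure, I would first invoke Proposition~\ref{2row}, which pins the two-row part of $\tilde H_1 C^3_8$ to $(5,3)$, and subtract $\Res(5,3) = (5,2)+(4,3)$ from the formula above; whatever remains must be the restriction of an $S_8$-representation all of whose constituents have at least three rows. I then peel off constituents greedily: $(6,1)$ in the residue can only lift to $(6,1,1)$, then $(5,2)$ only to $(5,2,1)$, then $(4,3)$ only to $(4,3,1)$, since at each step the alternative lifts $(7,1), (6,2), (5,3), (4,4)$ all have fewer than three rows. The principal obstacle is the final step: the residue $(3,3,1)+(3,2,2)$ admits several candidate lifts of length $\geq 3$, namely $(3,3,2), (4,2,2), (3,3,1,1), (3,2,2,1)$. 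I would rule out all but $(3,3,2)$ by noting that any of the other options would contribute a partition, such as $(4,2,1)$, $(3,2,1,1)$, or $(2,2,2,1)$, absent from the residue. This pins down $\tilde H_1 C^3_8$ as the claimed decomposition.
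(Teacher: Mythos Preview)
Your proof is correct and follows essentially the same route as the paper: apply the equivariant long exact sequence with $|A|=8$, compute $\Res\tilde H_1 C^3_8$ as an $S_7$-representation, use Proposition~\ref{2row} to pin down the two-row part as $(5,3)$, and then reconstruct the remaining constituents from the branching rule. The only cosmetic difference is that the paper first observes that $\Res\tilde H_1 C^3_8$ contains no partitions with more than three rows, hence $\tilde H_1 C^3_8$ itself has at most three rows, which spares it your final case analysis ruling out $(3,3,1,1)$ and $(3,2,2,1)$; your greedy peeling argument is nonetheless valid and reaches the same conclusion.
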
 
\begin{proof} Applying the equivariant long exact sequence (\ref{equiv}) with $|A| = 8$ we have exact sequences 
$$0 \to \Res \tilde H_i C_8^3 \to 0$$
for $i\neq 1$, and an exact sequence
$$0 \to \tilde H_1 C_7^3 \to \Res  \tilde H_1 C_8^3 \to \Ind \tilde H_0 C_5^3 \to 0.$$
Therefore, $\tilde H_i C_8^3 = 0$ for $i\neq 1$. To compute $\tilde H_1C_8^3$, note that from the exact sequence 
$$\Res \tilde H_1C_8^3 = \tilde H_1 C_7^3 + \Ind \tilde H_0 C_5^3$$ 
which consists of irreducible representations whose corresponding partitions have at most $3$ rows, therefore $\tilde H_1C_8^3$ contains only irreducible representations whose corresponding partitions have at most $3$ rows. Moreover, by Proposition \ref{2row}, the only irreducible representation whose corresponding partition has at most $2$ rows in $\tilde H_1C_8^3$ is $(5,3)$. Therefore, the restrictions of irreducible representations whose corresponding partitions have $3$ rows in $\tilde H_1C_8^3$ is equal to 
$$(3,2,2) \oplus 2 \cdot (3,3,1) \oplus 2\cdot (4,2,1) \oplus (4,3) \oplus 2\cdot(5,1,1) \oplus (5,2) \oplus (6,1).$$
It is easy to see that the set of irreducible representations whose corresponding partitions have $3$ rows in $\tilde H_1C_8^3$ has to be equal to $(6,1,1) \oplus (5,2,1) \oplus (4,3,1) \oplus (3,3,2)$.
\end{proof} 

\begin{Proposition} The only non-vanishing homology group of $C_9^3$ is 
$$\tilde H_1 C^3_9 = (6,2,1) \oplus (5,4) \oplus (5,3,1) \oplus (4,3,2).$$
\end{Proposition}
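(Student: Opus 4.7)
The plan is to apply the equivariant long exact sequence (\ref{equiv}) with $|A|=9$, $|B|=8$, $|C|=6$, exactly as in the proofs for $C^3_7$ and $C^3_8$. Using that $\tilde H_i C^3_8=0$ for $i\neq 1$ and $\tilde H_i C^3_6=0$ for $i\neq 0$, the long exact sequence collapses: for $r\geq 2$ and for $r=0$, both terms flanking $\Res\tilde H_r C^3_9$ vanish, forcing $\Res\tilde H_r C^3_9=0$. Since any nonzero $S_9$-representation has nonzero restriction to $S_8$ by the branching rule, this gives $\tilde H_r C^3_9=0$ for all $r\neq 1$.

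For $r=1$ the surviving portion is a short exact sequence of $S_8$-modules
\[
0\to\tilde H_1 C^3_8 \to \Res\tilde H_1 C^3_9 \to \Ind_{S_6\times S_2}^{S_8}\bigl((4,2)\otimes 1\bigr) \to 0.
\]
I would expand the induced module by Pieri's rule, adding a horizontal $2$-strip to $(4,2)$ to obtain $(6,2)\oplus(5,3)\oplus(5,2,1)\oplus(4,4)\oplus(4,3,1)\oplus(4,2,2)$, and combine with the explicit $\tilde H_1 C^3_8$ from the previous proposition to obtain $\Res\tilde H_1 C^3_9$ as a sum of $S_8$-irreducibles, each indexed by a partition of length at most $3$.

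Next, I would reconstruct $\tilde H_1 C^3_9$ from its restriction. An inspection of the branching rule shows that any partition of $9$ of length $\geq 4$ has at least one corner whose removal yields a partition of length $\geq 4$, so since $\Res\tilde H_1 C^3_9$ is supported on partitions of length $\leq 3$, the same holds for $\tilde H_1 C^3_9$. By Proposition \ref{2row} the two-row part must be exactly $(5,4)$, whose restriction contributes $(4,4)\oplus(5,3)$. Subtracting this leaves a three-row $S_8$-character to be lifted to a sum of three-row $S_9$-irreducibles via the branching rule.

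The main obstacle is this last lifting step: matching the remaining multiset of $S_8$-irreducibles with the corner-box restrictions of three-row partitions of $9$. I would verify that $(6,2,1),(5,3,1),(4,3,2)$ each contribute their three corner removals and that summing these restrictions reproduces exactly the remaining multiset, while direct inspection confirms that no other combination of three-row partitions of $9$ achieves this. This pins down $\tilde H_1 C^3_9=(5,4)\oplus(6,2,1)\oplus(5,3,1)\oplus(4,3,2)$.
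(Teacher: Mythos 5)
Your proposal is correct and follows essentially the same route as the paper: apply the equivariant long exact sequence with $|A|=9$ to kill $\tilde H_r C^3_9$ for $r\neq 1$, obtain the short exact sequence identifying $\Res\tilde H_1 C^3_9 = \tilde H_1 C^3_8 + \Ind\tilde H_0 C^3_6$, pin the two-row part to $(5,4)$ via Proposition \ref{2row}, and reconstruct the three-row part by branching. The only differences are presentational — you spell out the Pieri computation and the observation that a restriction supported on length-$\leq 3$ partitions forces the $S_9$-module to be as well, which the paper leaves implicit.
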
 
\begin{proof}
Applying the equivariant long exact sequence (\ref{equiv}) with $|A| = 9$ we have exact sequences
$$0\to \Res \tilde H_i C_9^3 \to 0$$
for $i\neq 1$, and an exact sequence
$$0 \to \tilde H_1 C_8^3 \to \Res \tilde H_1 C_9^3 \to \Ind \tilde H_0 C_6^3  \to 0.$$
Therefore, $\tilde H_i C_9^3 = 0$ for $i\neq 1$, and 
$$\Res \tilde H_1 C_9^3 = \tilde H_1C_8^3 + \Ind \tilde H_0C_6^3.$$
Moreover, by Proposition \ref{2row}, the only irreducible representation whose corresponding partition has at most $2$ rows in $\tilde H_1 C_9^3$ is $(5,4)$. Therefore, the restrictions of irreducible representations whose corresponding partitions have 3 rows in $\tilde H_1C_9^3$ is equal to 
$$(3,3,2) \oplus (4,2,2) \oplus 2\cdot (4,3,1) \oplus 2\cdot (5,2,1) \oplus (5,3) \oplus (6,1,1) \oplus (6,2).$$
It is easy to see that the set of irreducible representations whose corresponding partitions have $3$ rows in $\tilde H_1C_9^3$ has to be equal to $(6,2,1) \oplus (5,3,1) \oplus (4,3,2)$.
\end{proof}

\begin{Proposition} The only non-vanishing homology groups of $C_{10}^3$ are
$$\tilde H_2 C^3_{10} = (7,1,1,1) \oplus (5,3,1,1) \oplus (3,3,3,1),\;\; \tilde H_1 C^3_{10} = (5,5).$$
\end{Proposition}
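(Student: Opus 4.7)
The plan is to apply the equivariant long exact sequence (\ref{equiv}) with $|A| = 10$, $|B| = 9$, $|C| = 7$, and $|\beta| = 2$. Since $\tilde H_r C^3_7 = 0$ for $r \neq 1$ and $\tilde H_r C^3_9 = 0$ for $r \neq 1$, the sequence collapses to
$$0 \to \Res \tilde H_2 C^3_{10} \to \Ind \tilde H_1 C^3_7 \to \tilde H_1 C^3_9 \to \Res \tilde H_1 C^3_{10} \to 0,$$
and forces $\Res \tilde H_r C^3_{10} = 0$ for $r \notin \{1,2\}$. Combined with $\dim C^3_{10} = 2$, this yields $\tilde H_r C^3_{10} = 0$ for $r \neq 1, 2$.

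Next I would compute $\Ind \tilde H_1 C^3_7 = \Ind_{S_7 \times S_2}^{S_9}\bigl(((5,1,1) \oplus (3,3,1)) \otimes \mathbf{1}\bigr)$ via Pieri's rule to get an explicit sum of $S_9$-irreducibles. To pin down $\tilde H_1 C^3_{10}$, note that $\Res \tilde H_1 C^3_{10}$ is a quotient of $\tilde H_1 C^3_9 = (6,2,1) \oplus (5,4) \oplus (5,3,1) \oplus (4,3,2)$, whose summands all have at most three rows; since any partition of $10$ with four or more rows has a restriction containing a partition of the same row count (the only hypothetical exception being a staircase of the form $(a,a,a,1)$ with $3a+1=10$, which one checks still has a corner in row $3$), every irreducible of $\tilde H_1 C^3_{10}$ has at most three rows. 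By Proposition \ref{2row} the only two-row summand is $(5,5)$, with restriction $(5,4)$. For each of the eight three-row partitions of $10$, a short branching-rule check exhibits a summand of $\Res V^\lambda$ outside $\{(6,2,1),(5,3,1),(4,3,2)\}$ (e.g.\ a two-row piece like $(7,2)$ or $(6,3)$, or a three-row piece like $(5,2,2)$ or $(3,3,3)$), so no three-row lift is admissible. Hence $\tilde H_1 C^3_{10} = (5,5)$.

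With $\Res \tilde H_1 C^3_{10} = (5,4)$ in hand, the exact sequence determines $\Res \tilde H_2 C^3_{10} = \Ind \tilde H_1 C^3_7 - \tilde H_1 C^3_9 + (5,4)$, simplifying to a concrete sum of seven $S_9$-irreducibles. Since $\tilde H_2 C^3_{10}$ has no two-row summand by Proposition \ref{2row}, I would reconstruct it by inverse branching: for each summand $\mu$ of $\Res \tilde H_2 C^3_{10}$, enumerate partitions of $10$ that restrict to contain $\mu$ and discard those introducing a two-row piece or a summand not present in $\Res \tilde H_2 C^3_{10}$. This forces $(7,1,1)$ to lift uniquely to $(7,1,1,1)$, $(5,3,1)$ to $(5,3,1,1)$, and $(3,3,3)$ to $(3,3,3,1)$; the remaining four summands are then exactly the other constituents of $\Res$ of these three four-row partitions. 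The main obstacle is this final inverse branching step: while each candidate reduces to a mechanical enumeration, verifying that no three-row partition of $10$ can contribute and that the seven restriction summands can be grouped in only one way is the crux of the argument.
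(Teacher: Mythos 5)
Your proposal is correct, and it arrives at the same answer for $\tilde H_1 C^3_{10}$ by essentially the same argument as the paper: the surjection from $\tilde H_1 C^3_9$ onto $\Res \tilde H_1 C^3_{10}$ combined with Proposition~\ref{2row} and a branching check shows $(5,5)$ is the only summand. The genuine divergence is in how you pin down $\tilde H_2 C^3_{10}$. The paper's own proof does \emph{not} rely on inverse branching alone here: it feeds $\tilde H_1 C^3_{10} = (5,5)$ through Theorem~\ref{trans} to get $K_{2,1}(V,1)$ for $\dim V = 3$, uses Cohen-Macaulayness of $\bigoplus_k \Sym^{3k+1}V$ with $h$-vector $(3,6)$ and codimension $7$ to conclude $\dim K_{3,0}(V,1) = 0$, and then translates back via Theorem~\ref{trans} to deduce that no partition with at most $3$ rows occurs in $\tilde H_2 C^3_{10}$. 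Once that is known, the three $4$-row lifts $(7,1,1,1)$, $(5,3,1,1)$, $(3,3,3,1)$ are forced and exhaust the restriction. Your route instead does the full inverse-branching enumeration directly: among all partitions of $10$, only these three have their complete branching restriction contained in $\Res \tilde H_2 C^3_{10} = (3,3,2,1)\oplus(3,3,3)\oplus(4,3,1,1)\oplus(5,2,1,1)\oplus(5,3,1)\oplus(6,1,1,1)\oplus(7,1,1)$, and their restrictions tile this set exactly with multiplicity one, so the solution is unique. This is correct (one can verify the enumeration: every other partition of $10$, whether with $2$, $3$, $4$, or more rows, restricts to something outside the target set), and it is more elementary, since it avoids importing the Cohen-Macaulay Hilbert-series input. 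What the paper's approach buys is a cleaner conceptual statement (``no partitions with $\le 3$ rows'') that parallels the strategy used for the harder complexes $C^3_{20}$ and $C^3_{23}$ later, where the pure inverse-branching system is underdetermined and an external dimension count of a syzygy module is genuinely needed.
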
 
\begin{proof} Applying the equivariant long exact sequence (\ref{equiv}) with $|A| = 10$ we have exact sequences 
$$0\to \Res \tilde H_iC_{10}^3 \to 0$$
for $i\neq 1,2$, and an exact sequence
$$0 \to \Res \tilde H_2C_{10}^3 \to \Ind \tilde H_1 C_7^3 \to \tilde H_1 C_9^3 \to \Res \tilde H_1 C_{10}^3 \to 0.$$
Therefore, $\tilde H_iC_{10}^3 = 0$ for $i\neq 1,2$. Since $\tilde H_1C_9^3$ maps surjectively onto $\Res \tilde H_1C_{10}^3$, and by Proposition \ref{2row}, $\tilde H_1C_{10}^3$ contains the irreducible representation $(5,5)$, $(6,2,1) \oplus (5,3,1) \oplus (4,3,2)$ maps surjectively onto the restrictions of the other irreducible representations of $\tilde H_1C_{10}^3$. Moreover, there is no irreducible representation of $S_{10}$ whose restriction is a subrepresentation of $(6,2,1) \oplus (5,3,1) \oplus (4,3,2)$, thus $\tilde H_1C_{10}^3 = (5,5)$. Therefore, $\Res \tilde H_2C_{10}^3$ is equal to 
\begin{align*}
\Ind \tilde H_1 C_7^3 + \Res \tilde H_1 C_{10}^3 &- \tilde H_1C_9^3 = (3,3,2,1)\oplus(3,3,3)\oplus (4,3,1,1)\\
&\oplus(5,2,1,1)\oplus(5,3,1)\oplus(6,1,1,1)\oplus(7,1,1).
\end{align*}
Moreover, since $\tilde H_1C_{10}^3  = (5,5)$, by Theorem \ref{trans}, $K_{2,1}(V,1) = S_\l(V)$ with $\l = (5,5)$. Since $\oplus_{k=0}^\infty \Sym^{3k+1}(V)$ with $\dim V = 3$ is a Cohen-Macaulay module of codimension $7$ with $h$-vector $(3,6)$, 
$$\dim K_{3,0}(V,1) = 6\cdot \binom{7}{2}+ \dim K_{2,1}(V,1) - 3 \cdot \binom{7}{3} = 0.$$
By Theorem \ref{trans}, $\tilde H_2C_{10}^3$ does not contain any irreducible representations whose corresponding partitions have at most $3$ rows, therefore it must contain irreducible representations $(3,3,3,1)$, $(5,3,1,1)$, $(7,1,1,1)$ each with multiplicity $1$. But the sum of the restrictions of these irreducible representations is equal to the restriction of $\tilde H_2C_{10}^3$ already, thus we have the desired conclusion.
\end{proof}

\begin{Proposition} The only non-vanishing homology group of $C_{11}^3$ is
\begin{align*}
\tilde H_2 C^3_{11} & = (7,3,1) \oplus(6,4,1) \oplus(6,3,2)\oplus (5,4,2) \oplus(5,3,3)\\
& \oplus (6,3,1,1) \oplus (8,1,1,1) \oplus (7,2,1,1) \oplus(5,4,1,1)\\
& \oplus(5,3,2,1)\oplus(4,3,3,1)\oplus(3,3,3,2).
\end{align*}
\end{Proposition}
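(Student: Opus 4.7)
Apply the equivariant long exact sequence (\ref{equiv}) with $|A|=11$, so that $|B|=10$ and $|C|=8$. Using the vanishing $\tilde H_iC^3_8=0$ for $i\neq 1$ and $\tilde H_iC^3_{10}=0$ for $i\neq 1,2$ already established, the long exact sequence collapses to
\begin{align*}
0\to\tilde H_2C^3_{10}\to\Res\tilde H_2C^3_{11}\to\Ind\tilde H_1C^3_8\to\tilde H_1C^3_{10}\to\Res\tilde H_1C^3_{11}\to 0
\end{align*}
and yields $\Res\tilde H_rC^3_{11}=0$ for $r\notin\{1,2\}$; since the restriction of any nonzero representation of $S_{11}$ to $S_{10}$ is nonzero, this forces $\tilde H_rC^3_{11}=0$ for $r\notin\{1,2\}$.

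The first substantive step is to show $\tilde H_1C^3_{11}=0$. Its restriction is a quotient of the irreducible $\tilde H_1C^3_{10}=(5,5)$, so it equals either $0$ or $(5,5)$. By the branching rule the only $S_{11}$-irreducibles whose restriction contains $(5,5)$ are $(6,5)$, with $\Res V^{(6,5)}=(6,4)\oplus(5,5)$, and $(5,5,1)$, with $\Res V^{(5,5,1)}=(5,5)\oplus(5,4,1)$. Because multiplicities in an actual representation are non-negative, no sum of $S_{11}$-irreducibles restricts to exactly $(5,5)$. Hence $\Res\tilde H_1C^3_{11}=0$ and $\tilde H_1C^3_{11}=0$.

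Plugging this back, the exact sequence yields the virtual identity
\begin{align*}
\Res\tilde H_2C^3_{11}=\tilde H_2C^3_{10}+\Ind\tilde H_1C^3_8-\tilde H_1C^3_{10},
\end{align*}
which I would expand explicitly using Pieri's rule on each summand of $\tilde H_1C^3_8$ together with the branching of $(5,5)$, producing an explicit non-negative representation of $S_{10}$.

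Finally I would identify the constituents of $\tilde H_2C^3_{11}$ by imposing row-count constraints together with restriction matching. Two-row partitions are ruled out because for $\dim V=2$ the module $\bigoplus_k\Sym^{3k+2}V$ is Cohen-Macaulay of codimension $2$, hence its free resolution has length at most $2$ and $K^3_{3,0}(V,2)=0$; by Theorem \ref{trans} this kills every two-row contribution. Partitions with five or more rows are ruled out by the analogous Koszul-cohomology vanishing in $\dim V=4$, computed via the $h$-vector of $\bigoplus_k\Sym^{3k+2}V$ in the spirit of the $K^3_{3,0}(V,1)=0$ calculation used for $\tilde H_2C^3_{10}$. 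The remaining three- and four-row candidates are pinned down by verifying that the restrictions of the twelve irreducibles $(7,3,1),(6,4,1),(6,3,2),(5,4,2),(5,3,3),(6,3,1,1),(8,1,1,1),(7,2,1,1),(5,4,1,1),(5,3,2,1),(4,3,3,1),(3,3,3,2)$, each with multiplicity one, sum to the previously computed $\Res\tilde H_2C^3_{11}$. The main obstacle is precisely this final combinatorial check: since $\Res:R(S_{11})\to R(S_{10})$ is not injective, uniqueness of the decomposition must be extracted from the row-count bounds together with the non-negativity of multiplicities, and the Koszul/$h$-vector input needed to exclude five-row partitions is the secondary technical ingredient.
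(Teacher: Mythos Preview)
Your overall strategy matches the paper's: apply the long exact sequence with $|A|=11$, kill $\tilde H_1C^3_{11}$ using that nothing restricts to exactly $(5,5)$, obtain $\Res\tilde H_2C^3_{11}=\tilde H_2C^3_{10}+\Ind\tilde H_1C^3_8-\tilde H_1C^3_{10}$, and then reconstruct $\tilde H_2C^3_{11}$ from its restriction under row constraints. Your justification of the two--row vanishing via $\mathrm{pd}\bigl(\bigoplus_k\Sym^{3k+2}V\bigr)=2$ when $\dim V=2$ is a clean way to make explicit what the paper leaves implicit.

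There is, however, a genuine error in one step. You write that partitions with five or more rows are ruled out ``by the analogous Koszul--cohomology vanishing in $\dim V=4$.'' This cannot work: for $\dim V=4$ one has $S_\lambda(V)=0$ whenever $\lambda$ has at least five rows, so $K^3_{3,0}(V,2)$ in dimension $4$ carries \emph{no} information whatsoever about five--row Specht modules. An $h$--vector computation there constrains only the $\le 4$--row part. To exclude five--row $\lambda$ via Koszul cohomology you would need $\dim V\ge 5$, and no such vanishing is available.

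Fortunately this step is unnecessary, and the paper does not use it either. The representation $\Res\tilde H_2C^3_{11}$ you have computed contains only partitions with at most four rows (since $\tilde H_2C^3_{10}$ has only four--row parts and $\Ind\tilde H_1C^3_8$ adds a horizontal $2$--strip to partitions with at most three rows). Now every partition $\lambda\vdash 11$ with $\ell(\lambda)\ge 5$ has some constituent of $\Res V^\lambda$ with at least five rows: removing a box drops the number of rows by at most one, and the unique partition whose restriction lies entirely in four rows would be $(a,a,a,a,1)$ with $4a+1=11$, which has no integer solution. Hence no $\ge 5$--row partition can occur in $\tilde H_2C^3_{11}$, purely from the restriction data. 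This is exactly what the paper's ``induce $A$, then discard everything whose restriction is not contained in $A$'' step accomplishes. With this correction your argument is complete and coincides with the paper's.
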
 
\begin{proof}Applying the equivariant long exact sequence (\ref{equiv}) with $|A| = 11$ we have exact sequences 
$$0\to \Res \tilde H_i C_{11}^3 \to 0$$
for $i\neq 1, 2$ and an exact sequence
$$0 \to \tilde H_2 C_{10}^3 \to \Res \tilde H_2 C_{11}^3  \to \Ind \tilde H_1 C_8^3  \to \tilde H_1 C_{10}^3 \to \Res \tilde H_1 C_{11}^3 \to 0.$$
Therefore, $\tilde H_i C_{11}^3 = 0$ for $i\neq 1,2$. To show that $\tilde H_1C_{11}^3$ is zero, note that $\tilde H_1 C_{10}^3$, consists of partition $(5,5)$ only, maps surjectively onto $\Res \tilde H_1 C_{11}^3$, so $\tilde H_1C_{11}^3 = 0$. Thus 
$$\Res \tilde H_2C_{11}^3  = \tilde H_2C_{10}^3 + \Ind \tilde H_1C_8^3 - \tilde H_1C_{10}^3.$$
Moreover, $\tilde H_2C_{11}^3$ does not contain any irreducible representations whose corresponding partitions have at most $2$ rows, but $\Res \tilde H_2C_{11}^3$ contains $(6,4)$ and $(7,3)$, thus $\tilde H_2C_{11}^3$ must contain $(6,4,1)$ and $(7,3,1)$ each with multiplicity $1$. The restrictions of the remaining irreducible representations is equal to
$$A = \tilde H_2C_{10}^3 + \Ind \tilde H_1 C_8^3 - \tilde H_1 C_{10}^3 - \Res (6,4,1) - \Res (7,3,1).$$
To find a set of irreducible representations whose sum of the restrictions is $A$, we first induce $A$. We then eliminate all irreducible representations whose restrictions are not contained in $A$. After that, we have a set of partitions $B$. Then we write down the set of equations that make the restriction of $B$ equal to $A$. Then we solve for the non-negative integer solutions of that system. For this problem, it is easy to see that we have a unique solution as in the statement of the proposition.
\end{proof}

\begin{Proposition} The only non-vanishing homology group of $C_{12}^3$ is
\begin{align*}
\tilde H_2 C^3_{12} & = (7,4,1) \oplus(7,3,2) \oplus (6,5,1) \oplus(6,4,2) \oplus(6,3,3)\oplus(5,5,2)\\
&\oplus(5,4,3)\oplus (8,2,1,1) \oplus(7,3,1,1)  \oplus(6,4,1,1)  \oplus(6,3,2,1)\\
&\oplus(5,4,2,1)\oplus(5,3,3,1)\oplus(4,3,3,2).
\end{align*}
\end{Proposition}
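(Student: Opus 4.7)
The plan is to apply the equivariant long exact sequence (\ref{equiv}) with $|A| = 12$, mirroring the arguments for $C^3_{10}$ and $C^3_{11}$. Because the only non-zero homology of $C^3_9$ is $\tilde H_1$ and the only non-zero homology of $C^3_{11}$ is $\tilde H_2$, the long exact sequence degenerates into
\begin{equation*}
0 \to \tilde H_2 C^3_{11} \to \Res \tilde H_2 C^3_{12} \to \Ind \tilde H_1 C^3_9 \to \tilde H_1 C^3_{11} \to \Res \tilde H_1 C^3_{12} \to 0,
\end{equation*}
together with $\Res \tilde H_i C^3_{12} = 0$ for $i\ne 1,2$. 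Substituting $\tilde H_1 C^3_{11} = 0$ forces $\tilde H_1 C^3_{12} = 0$, so $\tilde H_i C^3_{12} = 0$ for $i\ne 2$, and
$$\Res \tilde H_2 C^3_{12} = \tilde H_2 C^3_{11} + \Ind \tilde H_1 C^3_9,$$
which is an explicitly computable $S_{11}$-character from the data of the preceding two propositions.

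The next step is to rule out 2-row summands in $\tilde H_2 C^3_{12}$. By Theorem \ref{trans} a 2-row partition $(a,b)$ of $12$ appearing in $\tilde H_2 C^3_{12}$ would contribute a copy of $S_{(a,b)}$ to $K_{3,1}^3(V,0)$ for $\dim V = 2$, but $\Ver(V,3)$ is then a rational normal curve whose coordinate ring has projective dimension $2$ over $\Sym^3 V$, forcing $K_{p,q}^3(V,0) = 0$ for all $p \ge 3$. Hence every irreducible summand of $\tilde H_2 C^3_{12}$ has at least three rows.

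The final step is combinatorial: identify the unique multiset of irreducible $S_{12}$-representations indexed by partitions with at least three rows whose restrictions to $S_{11}$ add up to the known character $\tilde H_2 C^3_{11} + \Ind \tilde H_1 C^3_9$. Following the template of the $C^3_{11}$ proof, I would first induce this character back to $S_{12}$ to obtain a finite candidate list $B$ of partitions, then discard those whose restriction does not fit inside the target, and finally write down and solve the resulting linear system in the non-negative integer multiplicities. I expect the main obstacle here to be verifying uniqueness of the solution, since the candidate pool is substantially larger than in the earlier propositions; as in those cases the verification is carried out with the help of the \texttt{MatchingComplex.m2} package, and the unique solution will coincide with the decomposition stated in the proposition.
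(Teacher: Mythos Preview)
Your proposal is correct and follows essentially the same route as the paper: apply the long exact sequence (\ref{equiv}) with $|A|=12$, conclude $\Res\tilde H_2 C^3_{12}=\tilde H_2 C^3_{11}+\Ind\tilde H_1 C^3_9$, rule out two-row summands, and solve the resulting restriction equations. The only cosmetic difference is that the paper first isolates $(6,5,1)$ and $(7,4,1)$ from the two-row pieces $(6,5),(7,4)$ of the restriction before running the induce--discard--solve procedure on the remainder, whereas you feed the whole restriction into that procedure at once; your explicit rational-normal-curve justification for the absence of two-row summands is a nice addition the paper leaves implicit.
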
 
\begin{proof}Applying the equivariant long exact sequence (\ref{equiv}) with $|A| = 12$ we have exact sequences 
$$0 \to \Res \tilde H_i C_{12}^3 \to 0$$
for $i\neq 2$ and an exact sequence
$$0 \to \tilde H_2C_{11}^3 \to \Res \tilde H_2 C_{12}^3 \to \Ind \tilde H_1 C_9^3 \to 0.$$
Therefore, $\tilde H_iC_{12}^3 = 0$ for $i\neq 2$ and 
$$\Res \tilde H_2C_{12}^3 = \tilde H_2C_{11}^3 + \Ind \tilde H_1 C_9^3.$$
Moreover, $\tilde H_2C_{12}^3$ does not contain any irreducible representations whose corresponding partitions have at most $2$ rows, but $\Res \tilde H_2C_{12}^3$ contains $(6,5)$ and $(7,4)$, thus $\tilde H_2C_{12}^3$ must contain $(6,5,1)$ and $(7,4,1)$ each with multiplicity $1$. Therefore, the sum of the restrictions of other irreducible representations in $\tilde H_2C_{12}^3$ is equal to 
$$A = \tilde H_2C_{11}^3 + \Ind \tilde H_1C_9^3 - \Res (6,5,1) - \Res (7,4,1).$$
Let $B$ be the set containing all irreducible representations that appears in the induction of $A$ whose restrictions are contained in $A$. Write down the equations that make the restriction of $B$ equal to $A$. Then we solve for the non-negative integer solutions of that system. For this problem, it is easy to see that we have a unique solution as in the statement of the proposition.
\end{proof}

\begin{Proposition} The only non-vanishing homology groups of $C_{13}^3$ are
\begin{align*}
\tilde H_3 C^3_{13} & = (9,1,1,1,1) \oplus (7,3,1,1,1) \oplus ( 5,5,1,1,1) \oplus (5,3,3,1,1)\\
			& \oplus (3,3,3,3,1)\\
\tilde H_2 C^3_{13} & = (7,5,1) \oplus (7,3,3) \oplus (6,5,2) \oplus (5,5,3).
\end{align*}
\end{Proposition}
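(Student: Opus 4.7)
The plan is to mirror the procedure used in the preceding propositions, applying the equivariant long exact sequence (\ref{equiv}) with $|A|=13$, $|B|=12$, and $|C|=10$. Substituting the known homology of $C^3_{10}$ --- namely $\tilde H_0=0$, $\tilde H_1=(5,5)$, and $\tilde H_2=(7,1,1,1)\oplus(5,3,1,1)\oplus(3,3,3,1)$ --- together with the fact that $\tilde H_rC^3_{12}=0$ for $r\neq 2$, the sequence immediately forces $\Res\tilde H_rC^3_{13}=0$ for $r\le 1$ and $r\ge 4$, hence $\tilde H_rC^3_{13}=0$ for those $r$. The only non-trivial segment that survives is
\begin{align*}
0\to \Res\tilde H_3C^3_{13}&\to \Ind\tilde H_2C^3_{10}\to \tilde H_2C^3_{12}\\
&\to \Res\tilde H_2C^3_{13}\to \Ind\tilde H_1C^3_{10}\to 0.
\end{align*}

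Next I would compute the two induced representations explicitly by Pieri's rule: inducing from $S_{10}\times S_2$ to $S_{12}$ against the trivial character on $S_2$ just amounts to attaching a horizontal $2$-strip to each constituent. This yields, in the Grothendieck ring of $S_{12}$, closed expressions for $\Ind\tilde H_2C^3_{10}$ and $\Ind\tilde H_1C^3_{10}=\Ind(5,5)$, and combining them with the known $\tilde H_2C^3_{12}$ produces the virtual identity
$$\Res\tilde H_2C^3_{13}-\Res\tilde H_3C^3_{13}=\tilde H_2C^3_{12}-\Ind\tilde H_2C^3_{10}+\Ind\tilde H_1C^3_{10}.$$

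I would then impose the structural constraint that neither $\tilde H_2C^3_{13}$ nor $\tilde H_3C^3_{13}$ contains irreducibles whose partitions have at most two rows: the module $\oplus_k\Sym^{3k+b}V$ is Cohen--Macaulay of codimension $2$ when $\dim V=2$, so its minimal free resolution has length $2$ and, by Theorem \ref{trans}, two-row partitions can appear only in $\tilde H_0$ or $\tilde H_1$. Following the enumerative recipe used in the previous propositions, I would list the $S_{13}$-irreducibles whose branching restriction fits into the virtual combination above, translate the compatibility condition into a linear system in their multiplicities, and solve it over the nonnegative integers.

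The main obstacle I anticipate is uniqueness of the solution --- in particular, pinning down that $\tilde H_3C^3_{13}$ has no three-row constituents, which the codimension-$2$ bound does not directly exclude. In the sibling cases the analogous systems turned out to be rigid by inspection, with a single nonnegative solution, and I would expect the same here. Should this fail, the fallback would be an appeal to Proposition \ref{duality3} once the three-row content of $\tilde H_2C^3_{14}$ has been accessed, or a Macaulay2 check of the linear strand of the resolution of $\oplus_k\Sym^{3k+1}V$ for $\dim V=3$; but for $n=13$ I expect the bookkeeping to suffice, reproducing the stated decompositions of $\tilde H_3C^3_{13}$ and $\tilde H_2C^3_{13}$ verbatim.
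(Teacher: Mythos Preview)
Your overall strategy matches the paper's: apply the equivariant long exact sequence (\ref{equiv}) with $|A|=13$, deduce vanishing outside degrees $2$ and $3$, and then extract $\tilde H_3$ and $\tilde H_2$ from the remaining five-term sequence. The one substantive point you leave unresolved is exactly the one you flag: ruling out three-row constituents in $\tilde H_3C^3_{13}$. The paper handles this not through the virtual Euler identity you wrote down, but by using the two separate inequalities the exact sequence supplies, namely
\[
\Ind\tilde H_2C^3_{10}-\tilde H_2C^3_{12}\;\subseteq\;\Res\tilde H_3C^3_{13}\;\subseteq\;\Ind\tilde H_2C^3_{10}.
\]
Since every constituent of $\tilde H_2C^3_{10}$ already has four rows, every constituent of $\Ind\tilde H_2C^3_{10}$ has at least four rows; hence $\Res\tilde H_3C^3_{13}$ contains no partitions with at most three rows. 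This immediately excludes any three-row constituent of $\tilde H_3C^3_{13}$ (its restriction would contain a three-row partition), and more: any four-row constituent $(a,b,c,1)$ is also excluded, since $(a,b,c)$ would appear in the restriction. With this observation the sandwich above forces the five-row answer uniquely, after which the virtual identity determines $\Res\tilde H_2C^3_{13}$ and the rest proceeds as you describe.

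So your proposal is correct in outline; you just need to use the injectivity $\Res\tilde H_3C^3_{13}\hookrightarrow\Ind\tilde H_2C^3_{10}$ as a genuine upper bound rather than collapsing the sequence to a single alternating-sum identity. No fallback to Proposition~\ref{duality3} or to a Macaulay2 computation is needed at $n=13$.
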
 
\begin{proof} Applying the equivariant long exact sequence (\ref{equiv}) with $|A| = 13$ we have exact sequences 
$$0 \to \Res \tilde H_i C_{13}^3 \to 0$$
for $i\neq 2,3$ and an exact sequence 
$$0 \to \Res \tilde H_3C_{13}^3 \to \Ind \tilde H_2C_{10}^3 \to \tilde H_2C_{12}^3\to  \Res \tilde H_2 C_{13}^3 \to \Ind \tilde H_1 C_{10}^3 \to 0.$$
Therefore, $\tilde H_i C_{13}^3 = 0$ for $i\neq 2, 3$. From the sequence, $\Res \tilde H_3C_{13}^3$ is contained in $\Ind \tilde H_2C_{10}^3$ and contains $\Ind \tilde H_2C_{10}^3 - \tilde H_2C_{12}^3$. There is a unique solution as indicated in the proposition. Therefore, 
$$\Res \tilde H_2C_{13}^3 = \Res \tilde H_3C_{13}^3 - \Ind \tilde H_2C_{10}^3 + \tilde H_2C_{12}^3 + \Ind \tilde H_1C_{10}^3.$$
Moreover, $\Res \tilde H_2C_{13}^3$ does not contain any irreducible representations whose corresponding partitions have at most two rows, but its restriction contains $(7,5)$, thus it must contain $(7,5,1)$ with multiplicity $1$. Therefore, the sum of the restrictions of the other irreducible representations in $\tilde H_2C_{13}^3$ is equal to 
$$(5,4,3) \oplus 2\cdot (5,5,2) \oplus (6,3,3) \oplus (6,4,2) \oplus (6,5,1) \oplus (7,3,2).$$
It is easy to see that there is a unique solution as stated in the proposition.
\end{proof}

\section{Proof of the main theorem}
In this section we determine the homology groups of the matching complexes $C^3_n$ for $14\le n \le 24$ using the equivariant long exact sequence (\ref{equiv}) and the duality as stated in Proposition \ref{duality3}. Note that to determine the homology groups of $C^3_{20}$ and $C^3_{23}$ we need to compute the dimensions of $K_{p,0}(V,2)$ for $p = 6,7$. In the following, for a representation $W$ of the symmetric group $S_N$ and a positive number $k$, we denote by $W_k$ the direct sum of all irreducible representations of $W$ whose corresponding partitions have at most $r$ rows and let $W^k = W - W_k$. 

\begin{Proposition}\label{Prop2.1} The only homology groups of $C_n^3$ for $14 \le n \le 24$ containing irreducible representations whose corresponding partitions have at most $3$ rows are $\tilde H_3 C_{14}^3$, $\tilde H_3 C_{15}^3$, $\tilde H_3 C_{16}^3$, $\tilde H_4C_{17}^3$, $\tilde H_4C_{18}^3$, $\tilde H_4C_{19}^3$, $\tilde H_4C_{20}^3$, $\tilde H_5C_{21}^3$, $\tilde H_5C_{22}^3$ and $\tilde H_5C_{23}^3$. Moreover, 
$$(\tilde H_4C_{20}^3)_3 = (8,8,4) \oplus (8,6,6), \text{ and } (\tilde H_5C_{23}^3)_3 = (9,8,6).$$
\end{Proposition}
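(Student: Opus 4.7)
The approach is a systematic application of the duality in Proposition \ref{duality3}, which gives an isomorphism of multiplicity functions
$$(\tilde H_i C^3_N)_3 \cong (\tilde H_{5-i} C^3_{27-N})_3$$
implemented by the involution $(\lambda_1, \lambda_2, \lambda_3) \mapsto (9-\lambda_3, 9-\lambda_2, 9-\lambda_1)$, which visibly preserves the condition of having at most three rows. For $14 \le N \le 24$ we have $27-N \in \{3, 4, \ldots, 13\}$, so every such group is paired with a homology group of $C^3_n$ already computed in Section \ref{Sec1}, and the classification of 3-row content reduces to a look-up table.

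The first step is to scan the computations of Section \ref{Sec1} and list every pair $(i,n)$ with $n \le 13$ such that $\tilde H_i C^3_n$ contains an irreducible whose partition has at most three rows. Inspection of Proposition \ref{Prop1.1}, Proposition \ref{2row}, and the explicit descriptions of $\tilde H_* C^3_n$ for $7 \le n \le 13$ shows that the contributing pairs are exactly $(0,4), (0,5), (0,6)$, $(1,7), (1,8), (1,9), (1,10)$, and $(2,11), (2,12), (2,13)$. In particular, $\tilde H_2 C^3_{10}$ (whose partitions all have four rows) and $\tilde H_3 C^3_{13}$ (whose partitions all have five rows) drop out of the list, as do the vanishing groups $\tilde H_1 C^3_n$ for $n = 11,12,13$. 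Applying the duality pair-by-pair to each surviving $(i,n)$ yields $(5-i, 27-n)$, giving exactly the ten pairs $(3,14), (3,15), (3,16)$, $(4,17), (4,18), (4,19), (4,20)$, $(5,21), (5,22), (5,23)$ enumerated in the statement, and nothing else.

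It remains to compute the two explicit values. For $(\tilde H_4 C^3_{20})_3$, the dual is $(\tilde H_1 C^3_7)_3 = (5,1,1) \oplus (3,3,1)$; applying $(\lambda_1, \lambda_2, \lambda_3) \mapsto (9-\lambda_3, 9-\lambda_2, 9-\lambda_1)$ to each summand gives $(8,8,4) \oplus (8,6,6)$. For $(\tilde H_5 C^3_{23})_3$, the dual is $(\tilde H_0 C^3_4)_3 = (3,1,0)$, which maps to $(9,8,6)$. The only real work is careful bookkeeping, checking that no 3-row partition is overlooked in the scan of Section \ref{Sec1}; since each relevant homology group is listed explicitly, this is routine, and there is no genuine analytic obstacle once the duality is in hand.
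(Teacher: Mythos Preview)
Your proof is correct and follows exactly the approach indicated in the paper, which simply cites the Section~\ref{Sec1} computations together with the duality of Proposition~\ref{duality3}; you have merely unpacked the bookkeeping that the paper leaves implicit. The pairings $(i,n)\leftrightarrow(5-i,27-n)$ and the two explicit dual computations $(5,1,1)\oplus(3,3,1)\mapsto(8,8,4)\oplus(8,6,6)$ and $(3,1,0)\mapsto(9,8,6)$ are all correct.
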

\begin{proof}This follows from the results of the homology of matching complexes $C^3_n$ for $n \le 13$ in section \ref{Sec1} and the duality in Proposition \ref{duality3}.
\end{proof}

\begin{Proposition}\label{computation1} The only non-vanishing homology groups of $C_n^3$ for $14 \le n \le 19$ are $\tilde H_3C_{14}^3$, $\tilde H_3 C^3_{15}$, $\tilde H_3 C^3_{16}$, $\tilde H_4C_{16}^3$, $\tilde H_4C_{17}^3$, $\tilde H_4C_{18}^3$, $\tilde H_4C_{19}^3$, $\tilde H_5C_{19}^3$.
\end{Proposition}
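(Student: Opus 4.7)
The plan is to apply the equivariant long exact sequence (\ref{equiv}) inductively for $n = 14, 15, \ldots, 19$, taking $|A| = n$ so that $|B| = n-1$ and $|C| = n-3$. At each stage, the long exact sequence expresses $\Res \tilde H_r(C^3_n)$ in terms of the homologies of $C^3_{n-1}$ and $C^3_{n-3}$. These are either known from Section \ref{Sec1} (when $n-3 \le 13$, i.e.\ $n \le 16$) or obtained from earlier steps of the current induction (for $n = 17, 18, 19$). The goal is to verify that every pair $(n,r)$ not appearing in the conclusion gives $\tilde H_r(C^3_n) = 0$.

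The key structural observation is this: for $n \ge 5$ and any partition $\mu$ of $n$ with at least $4$ rows, the restriction $\Res V^\mu$ to $S_{n-1}$ always contains at least one irreducible component indexed by a partition with at least $4$ rows. Indeed, any such $\mu$ has a removable corner outside its last row (the only way to avoid this would be for $\mu$ to be the single column $(1^n)$, which for $n \ge 5$ still restricts to a partition of $\ge 4$ rows), and removing such a corner preserves the row count. Combined with Proposition \ref{Prop2.1}, which pins down exactly which homology groups of $C^3_n$ for $14 \le n \le 19$ may contain irreducible representations indexed by partitions with at most $3$ rows, this forces any hypothetical contribution to $\tilde H_r(C^3_n)$ outside the listed set to consist solely of $\ge 4$-row irreducibles.

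For each pair $(n,r)$ outside the listed set, inspection of the exact sequence shows that the terms squeezing $\Res \tilde H_r(C^3_n)$---the relevant quotient of $\tilde H_r(C^3_{n-1})$ coming from the left and the relevant kernel of the map $\Ind \tilde H_{r-1}(C^3_{n-3}) \to \tilde H_{r-1}(C^3_{n-1})$ coming from the right---consist only of representations of $S_{n-1}$ indexed by partitions with at most $3$ rows. By the structural observation, any $\ge 4$-row irreducible in $\tilde H_r(C^3_n)$ would restrict to include a $\ge 4$-row component, which is incompatible with this containment. Hence $\tilde H_r(C^3_n) = 0$ for every $(n,r)$ outside the listed set.

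The main obstacle is not conceptual but bookkeeping: for each of the forbidden pairs $(n,r)$, one must explicitly unpack the relevant segment of the long exact sequence and verify the $\le 3$-row containment of the neighboring terms, using the explicit descriptions of $\tilde H_\ast(C^3_m)$ for $m \le 13$ from Section \ref{Sec1} together with the vanishing results already accumulated in the present induction. In particular, the appearance of $\tilde H_4(C^3_{16})$ and $\tilde H_5(C^3_{19})$ in the list is consistent with the induction, since at these pairs the LES does not force vanishing and the allowed contributions must be purely $\ge 4$-row. The case-by-case verification is routine and is carried out by the \texttt{MatchingComplex.m2} package.
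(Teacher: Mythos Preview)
The paper's own proof is one line: it delegates to the Macaulay2 package, which (as the appendix explains) computes each $\tilde H_i(C^3_n)$ in full by writing down the inequalities coming from (\ref{equiv}) and solving the resulting integer system. Your proposal also ends by invoking the package, so in that narrow sense the two agree. But the conceptual layer you add on top---that the vanishing at forbidden pairs $(n,r)$ follows because the terms flanking $\Res\tilde H_r(C^3_n)$ in (\ref{equiv}) are purely $\le 3$-row, so Proposition~\ref{Prop2.1} plus your restriction observation finish the job---is not correct, and taken at face value it leaves a genuine gap.

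The problem is visible at $(n,r)=(17,3)$. With $\tilde H_2(C^3_{14})=0$, the sequence (\ref{equiv}) gives
\[
\Ind \tilde H_3(C^3_{14}) \To \tilde H_3(C^3_{16}) \To \Res \tilde H_3(C^3_{17}) \To 0,
\]
so $\Res\tilde H_3(C^3_{17})$ is the cokernel of the first map. But $\tilde H_3(C^3_{16})$ is \emph{not} $\le 3$-row: from the $n=16$ instance of (\ref{equiv}) its restriction surjects onto $\Ind\tilde H_2(C^3_{13})$, and Pieri applied to the $3$-row partitions in $\tilde H_2(C^3_{13})$ already produces $4$-row partitions of $15$. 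Thus the relevant quotient sits inside a representation with many $\ge 4$-row constituents, and your restriction observation gives no contradiction. To see the cokernel is zero one must know $\tilde H_3(C^3_{14})$ and $\tilde H_3(C^3_{16})$ explicitly, including all their $\ge 4$-row pieces, and then solve the resulting constraints---exactly the full computation the package performs. Your $\le 3$-row argument does handle $(14,2)$ cleanly, and most other forbidden pairs die for the trivial reason that both neighboring terms vanish outright; but $(17,3)$ is not of either type, and there is no shortcut around the complete inductive determination of the nonvanishing groups.
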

\begin{proof} The computational proof is given in our Macaulay2 package MatchingComplex.m2.
\end{proof}

\begin{Proposition}\label{C20} The only non-vanishing homology groups of $C_{20}^3$ are $\tilde H_5C_{20}^3$ and $$\tilde H_4C_{20}^3 = (8,8,4) \oplus (8,6,6).$$
\end{Proposition}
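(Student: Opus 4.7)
The strategy is to combine the equivariant long exact sequence (\ref{equiv}) with $|A|=20$, the explicit homology computations for $C_n^3$, $n\le 19$ from Proposition \ref{computation1}, the three-row content of $\tilde H_4 C_{20}^3$ coming from Proposition \ref{Prop2.1}, and a single Macaulay2 computation of $\dim K_{5,1}(V,2)$ for $\dim V=4$.

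First, taking $B=A\setminus\{a\}$ and $C=A\setminus\a$ so that $|B|=19$ and $|C|=17$, Proposition \ref{computation1} states that the only non-vanishing homologies among the $\tilde H_r C_{17}^3$ and $\tilde H_r C_{19}^3$ are $\tilde H_4 C_{17}^3$, $\tilde H_4 C_{19}^3$, and $\tilde H_5 C_{19}^3$. Substituting into (\ref{equiv}) causes the long exact sequence to collapse in every degree $r\ne 4,5$, forcing $\Res\tilde H_r C_{20}^3=0$ and hence $\tilde H_r C_{20}^3=0$ by faithfulness of restriction from $S_{20}$ to $S_{19}$. The surviving portion is the five-term exact sequence
\begin{equation*}
0\to \tilde H_5 C_{19}^3 \to \Res\tilde H_5 C_{20}^3 \to \Ind\tilde H_4 C_{17}^3 \to \tilde H_4 C_{19}^3 \to \Res\tilde H_4 C_{20}^3 \to 0,
\end{equation*}
which determines $\Res\tilde H_4 C_{20}^3$ as a virtual $S_{19}$-representation.

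Next, by Proposition \ref{Prop2.1} the three-row part of $\tilde H_4 C_{20}^3$ equals $(8,8,4)\oplus(8,6,6)$, so subtracting the restriction of these two representations from the computed $\Res\tilde H_4 C_{20}^3$ leaves a virtual representation that must equal the restriction of the remaining irreducibles of $\tilde H_4 C_{20}^3$, all of which have at least four rows. Enumerating the partitions of $20$ whose restrictions can contribute non-trivially produces a finite list of candidate four- and five-row partitions together with a system of non-negative integer equations matching restrictions term by term.

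The main obstacle is that, as noted in the introduction, this system has more than one non-negative solution. To resolve this, I would invoke Theorem \ref{trans} evaluated at $\dim V=4$: the identity
\begin{equation*}
\dim K_{5,1}(V,2)=\sum_{\l\,:\,\ell(\l)\le 4} m_\l\cdot\dim S_\l(V),
\end{equation*}
where $m_\l$ is the multiplicity of $V^\l$ in $\tilde H_4 C_{20}^3$, kills the ambiguity on the four-row side once the left-hand side is computed in Macaulay2 as a Betti number of the minimal free resolution of $\bigoplus_k \Sym^{3k+2}V$: subtracting $\dim S_{(8,8,4)}(V)+\dim S_{(8,6,6)}(V)$ isolates the total dimension of the four-row contribution, which combined with the restriction equations forces every four-row multiplicity to vanish. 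For the five-row candidates, I note that $\tilde H_4 C_{19}^3$ has all constituents of row length at most four, so the only five-row partition $\l$ of $20$ with $\Res V^\l$ supported on $\le 4$-row partitions is $\l=(4,4,4,4,4)$, and checking that $(4,4,4,4,3)=\Res(4,4,4,4,4)$ does not appear in the residual eliminates this last case. This leaves $\tilde H_4 C_{20}^3=(8,8,4)\oplus(8,6,6)$ and finishes the proof.
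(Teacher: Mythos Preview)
Your proposal follows essentially the same route as the paper: both apply the long exact sequence (\ref{equiv}) with $|A|=20$, use Proposition~\ref{computation1} to reduce to degrees $4$ and $5$, invoke Proposition~\ref{Prop2.1} to pin down the three-row content $(8,8,4)\oplus(8,6,6)$, and then resolve the remaining ambiguity with a single Macaulay2 Betti-number computation for $\dim V=4$.

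Two small differences are worth noting. First, the paper does not compute $\dim K_{5,1}(V,2)$ directly; instead it computes the linear Betti number $\dim K_{6,0}(V,2)=14003$ and then uses that $\bigoplus_k\Sym^{3k+2}V$ is Cohen--Macaulay of codimension $16$ with $h$-vector $(10,16,1)$ to obtain $\dim K_{5,1}(V,2)=14003-10\binom{16}{6}+16\binom{16}{5}-\binom{16}{4}=1991$. Since $\dim S_{(8,8,4)}(V)+\dim S_{(8,6,6)}(V)=1991$, there is no room for any further $\le 4$-row constituent. This is the same conclusion you reach, but via a cheaper machine computation. Second, you are more explicit than the paper about the $\ge 5$-row constituents: the paper's written argument passes silently over this point (relying on the package output via the surjection $\tilde H_4 C_{19}^3\twoheadrightarrow\Res\tilde H_4 C_{20}^3$), whereas you spell out the branching argument reducing to $(4,4,4,4,4)$. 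Your treatment here hinges on the assertion that every constituent of $\tilde H_4 C_{19}^3$ has at most four rows; this is a fact read off from the package decomposition in Proposition~\ref{computation1}, so be sure to cite it as such rather than as an a~priori structural statement.
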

\begin{proof} Applying the equivariant long exact sequence (\ref{equiv}) with $|A| = 20$ we have exact sequences 
$$0 \to \Res \tilde H_i C_{20}^3 \to 0$$
for $i\neq 4,5$ and an exact sequence 
$$0 \to \tilde H_5C_{19}^3 \to \Res \tilde H_5C_{20}^3 \to \Ind \tilde H_4C_{17}^3 \to \tilde H_4C_{19}^3\to  \Res \tilde H_4 C_{20}^3\to 0.$$
Therefore, $\tilde H_i C_{20}^3 = 0$ for $i\neq 4, 5$. By Theorem \ref{trans}, $K_{4,2}(2) = 0$. To determine $\tilde H_4C_{20}^3$, note that by Proposition \ref{Prop2.1} and Theorem \ref{trans}, $K_{5,1}(V,2)$ contains $M = S_\l(V) \oplus S_\mu (V)$ where $\l = (8,8,4)$ and $\mu = (8,6,6)$. Moreover, using Macaulay2 to compute the dimensions of minimal linear syzygies of the module $\oplus_{k=0}^\infty \Sym^{3k+2} (V)$ with $\dim V = 4$, we get $\dim K_{6,0}(V,2) = 14003$. 
Since $\oplus_{k=0}^\infty \Sym^{3k+2} (V)$ is a Cohen-Macaulay module of codimension $16$ with $h$-vector $(10,16,1)$, 
$$\dim K_{5,1}(V,2) = 14003 - 10 \cdot \binom{16}{6} + 16\cdot \binom{16}{5} - \binom{16}{4} = 1991.$$
Since $\dim M =  1991$, $K_{5,1}(V,2) \cong M$. By Theorem \ref{trans}, 
$$\tilde H_4C_{20}^3 = (8,8,4) \oplus (8,6,6).$$
Finally, 
$$\Res \tilde H_5C_{20}^3 = \tilde H_5C_{19}^3 + \Ind \tilde H_4C_{17}^3 - \tilde H_4C_{19}^3 +  \Res \tilde H_4 C_{20}^3.$$
This determines $\tilde H_5C_{20}^3$ as given in our package MatchingComplex.m2.
\end{proof}

\begin{Proposition}\label{computation2} The only non-vanishing homology groups of $C_{21}^3$ and $C^3_{22}$ are $\tilde H_5C_{21}^3$, $\tilde H_5C_{22}^3$, $\tilde H_6C_ {22}^3$.
\end{Proposition}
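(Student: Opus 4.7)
The plan is to apply the equivariant long exact sequence (\ref{equiv}) first with $|A| = 21$ (so $|B| = 20$, $|C| = 18$) and then with $|A| = 22$ (so $|B| = 21$, $|C| = 19$), feeding in the homology groups of $C^3_{18}, C^3_{19}, C^3_{20}$ supplied by Propositions \ref{computation1} and \ref{C20}. Among these only $\tilde H_4 C^3_{18}$, $\tilde H_4 C^3_{19}$, $\tilde H_5 C^3_{19}$, $\tilde H_4 C^3_{20}$ and $\tilde H_5 C^3_{20}$ are nonzero, so for $|A| = 21$ the long exact sequence pinches off to $0 \to \Res \tilde H_r C^3_{21} \to 0$, and therefore $\tilde H_r C^3_{21} = 0$ whenever $r \le 3$ or $r \ge 6$.

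The only degree that does not die immediately is $r = 4$, where the sequence reduces to
$$\Ind \tilde H_4 C^3_{18} \to \tilde H_4 C^3_{20} \to \Res \tilde H_4 C^3_{21} \to 0,$$
so $\Res \tilde H_4 C^3_{21}$ is a quotient of $\tilde H_4 C^3_{20} = (8,8,4) \oplus (8,6,6)$ and hence is supported on partitions with exactly three rows. By Proposition \ref{Prop2.1}, however, $\tilde H_4 C^3_{21}$ contains no irreducible whose partition has at most three rows, so every constituent $V^\lambda$ of it satisfies $\lambda_4 \ge 1$. A branching-rule check then shows that for any partition $\lambda$ of $21$ with at least four rows, $\Res V^\lambda$ must contain some partition with at least four rows: the only $\lambda$ for which every removable corner leaves a shape with $\le 3$ rows is $\lambda = (1,1,1,1)$, which has size $4 \ne 21$. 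This contradicts $\Res \tilde H_4 C^3_{21}$ being supported on three-row partitions, forcing $\tilde H_4 C^3_{21} = 0$.

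Once $\tilde H_r C^3_{21} = 0$ is known for every $r \ne 5$, the computation for $C^3_{22}$ is straightforward: applying (\ref{equiv}) with $|A| = 22$ and using Proposition \ref{computation1} together with the freshly established vanishings, the long exact sequence pinches off for every $r \notin \{5, 6\}$ and yields the claimed vanishings directly. No further branching or restriction argument is needed at this step, and the non-vanishing groups $\tilde H_5 C^3_{21}$, $\tilde H_5 C^3_{22}$, $\tilde H_6 C^3_{22}$ are then read off as extensions from the remaining pieces of the same sequence (as recorded in the MatchingComplex.m2 package).

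The principal obstacle is really the $r = 4$ case for $C^3_{21}$: all the other vanishings collapse from exactness alone, but at $r = 4$ one must combine the equivariant long exact sequence with Proposition \ref{Prop2.1} and the branching-rule observation that the restriction from $S_{21}$ to $S_{20}$ of any $\ge 4$-row irreducible retains a $\ge 4$-row constituent. The observation itself is elementary, but it is what separates the forced three-row content on the $C^3_{20}$ side from the forced $\ge 4$-row content on the $C^3_{21}$ side and so produces the vanishing.
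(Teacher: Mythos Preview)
Your argument is correct. The paper's own proof is simply a pointer to the Macaulay2 package \texttt{MatchingComplex.m2}, so the underlying strategy---apply the equivariant long exact sequence (\ref{equiv}) with $|A|=21$ and then $|A|=22$, feeding in the homology of $C^3_{18}$, $C^3_{19}$, $C^3_{20}$ from Propositions \ref{computation1} and \ref{C20}---is identical to yours.

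The one place where the two presentations differ is the vanishing of $\tilde H_4 C^3_{21}$. The paper's package handles this by enumerating all partitions $\lambda$ of $21$ whose restriction is contained in $(8,8,4)\oplus(8,6,6)$ and checking that none survive the constraint from Proposition \ref{Prop2.1}. You replace this enumeration with the clean branching-rule observation that any $\lambda\vdash 21$ with $\lambda_4\ge 1$ has at least one removable corner whose removal still leaves $\ge 4$ rows (the unique exception $(1,1,1,1)$ having the wrong size), so $\Res V^\lambda$ cannot be supported on three-row shapes. This is exactly the conceptual content of what the package verifies mechanically, and it makes the argument self-contained without recourse to the software. Once $\tilde H_4 C^3_{21}=0$ is established, the $|A|=22$ step is, as you note, pure exactness.
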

\begin{proof} The computational proof is given in our Macaulay2 package MatchingComplex.m2.
\end{proof}

\begin{Proposition}\label{C23} The only non-vanishing homology groups of $C^3_{23}$ are $\tilde H_6C_{23}^3$ and
\begin{align*}
\tilde H_5C_{23}^3 &= (9, 8, 6)  \oplus (8, 6, 6, 3) \oplus (8, 7, 6, 2) \oplus (8, 8, 4, 3) \oplus (8, 8, 5, 2)\\
		& \oplus (8, 8, 6, 1) \oplus (9, 6, 6, 2) \oplus (9, 7, 6, 1)  \oplus (9, 8, 4, 2) \oplus (9, 8, 5, 1)\\
		& \oplus (10, 6, 6, 1) \oplus (10, 8, 4, 1).
\end{align*}
\end{Proposition}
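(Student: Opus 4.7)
The plan is to mirror the strategy of Proposition \ref{C20}, with the extra twist that $\tilde H_5 C^3_{23}$ now contains both three- and four-row irreducibles. First apply the equivariant long exact sequence (\ref{equiv}) with $|A|=23$: since $\tilde H_r C^3_{20}=0$ for $r\notin\{4,5\}$ (Proposition \ref{C20}) and $\tilde H_r C^3_{22}=0$ for $r\notin\{5,6\}$ (Proposition \ref{computation2}), we get $\tilde H_r C^3_{23}=0$ for $r\notin\{5,6\}$ together with a single six-term exact sequence of $S_{22}$-representations
\begin{align*}
0 & \to \tilde H_6 C^3_{22}\to \Res \tilde H_6 C^3_{23}\to \Ind \tilde H_5 C^3_{20}\to \tilde H_5 C^3_{22}\\
& \to \Res \tilde H_5 C^3_{23} \to \Ind \tilde H_4 C^3_{20}\to 0
\end{align*}
in which the four outer terms are already known.

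By Proposition \ref{Prop2.1} the three-row part of $\tilde H_5 C^3_{23}$ is exactly $(9,8,6)$. To bound the row count from above, I would check (using the explicit decompositions stored in MatchingComplex.m2, together with Pieri for the induction $S_{20}\times S_2\uparrow S_{22}$) that every constituent of $\tilde H_5 C^3_{22}$ and of $\Ind \tilde H_4 C^3_{20}$ is a partition of at most four rows. Then $\Res \tilde H_5 C^3_{23}$, being an extension of $\Ind \tilde H_4 C^3_{20}$ by a subquotient of $\tilde H_5 C^3_{22}$, satisfies the same four-row bound; but any partition $\lambda\vdash 23$ with five or more rows produces at least one five-row irreducible under $\Res^{S_{23}}_{S_{22}}$, so no such $\lambda$ can appear in $\tilde H_5 C^3_{23}$.

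The decisive numerical input is a Macaulay2 computation of $\dim K_{7,0}(V,2)$ for $\dim V=4$, namely the number of minimal linear seventh syzygies of $\bigoplus_{k\ge 0}\Sym^{3k+2}V$ over $\Sym(\Sym^3 V)$. This module is Cohen--Macaulay of codimension $16$ with $h$-vector $(10,16,1)$, and the vanishing $\tilde H_r C^3_{23}=0$ for $r\le 4$ established above translates, via Theorem \ref{trans}, into $\dim K_{p,q}(V,2)=0$ whenever $p\le 5$ and $p+q=7$. The Hilbert series identity then yields
\[
\dim K_{6,1}(V,2)=\dim K_{7,0}(V,2)-10\binom{16}{7}+16\binom{16}{6}-\binom{16}{5},
\]
which by Theorem \ref{trans} is exactly the total Schur dimension (with $\dim V=4$) of the at-most-four-row part of $\tilde H_5 C^3_{23}$. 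Subtracting $\dim S_{(9,8,6)}(V)$ isolates the four-row Schur dimension, and feeding this numerical constraint together with the restriction equation read off from the six-term sequence into the MatchingComplex.m2 solver returns the listed twelve-term decomposition as the unique non-negative integer solution; $\tilde H_6 C^3_{23}$ is then recovered from the remaining portion of the sequence.

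The hardest part is this last step. The six-term sequence does not decouple $\Res \tilde H_5 C^3_{23}$ from $\Res \tilde H_6 C^3_{23}$: the image of $\Ind \tilde H_5 C^3_{20}\to \tilde H_5 C^3_{22}$ is not determined by character theory alone, so the two unknowns must be pinned down in tandem using only the branching inequalities and the single Macaulay2 dimension as external anchor. The Macaulay2 computation of $\dim K_{7,0}(V,2)$ with $\dim V=4$ is itself substantial, as it requires pushing the minimal free resolution of a module over a twenty-variable polynomial ring out to homological degree seven, and this combination is exactly what makes $C^3_{23}$ the most elaborate matching complex of the induction.
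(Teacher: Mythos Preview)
Your overall architecture is right—the six-term sequence, the identification $(\tilde H_5 C^3_{23})_3=(9,8,6)$, and the use of $\dim K_{7,0}(V,2)$ to extract $\dim K_{6,1}(V,2)$ all match the paper. But the step where you bound the row count of $\tilde H_5 C^3_{23}$ a priori does not go through: your claim that every constituent of $\tilde H_5 C^3_{22}$ has at most four rows is false. The explicit decomposition (recorded in the package and invoked in the paper's proof) shows that $\tilde H_5 C^3_{22}$ contains irreducibles with up to six rows, while $\tilde H_6 C^3_{22}$ consists entirely of eight-row irreducibles. So $\Res\tilde H_5 C^3_{23}$ is only bounded by six rows from the exact sequence, and the single scalar $\dim K_{6,1}(V,2)$ (which constrains only the $\le 4$-row part) cannot by itself rule out five- and six-row constituents in $\tilde H_5 C^3_{23}$.

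The paper does not attempt an a priori four-row bound. Instead it first reads off, from the three-row slice of the sequence, five specific four-row irreducibles $Y_1=(8,8,6,1)\oplus(9,7,6,1)\oplus(9,8,5,1)\oplus(10,6,6,1)\oplus(10,8,4,1)$ that must lie in $\tilde H_5 C^3_{23}$, and checks that $\Res\bigl(Y_1\oplus(9,8,6)\bigr)$ already surjects onto $\Ind\tilde H_4 C^3_{20}$; this truncates the tail of the sequence. Separately, the $8$-row/$\le 6$-row dichotomy between $\tilde H_6 C^3_{22}$ and $\tilde H_5 C^3_{22}$ lets one split off and determine $(\tilde H_6 C^3_{23})^6$ exactly. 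Only after these two reductions does one obtain a four-term sequence coupling $\Res(\tilde H_6 C^3_{23})_6$ and $\Res Z$ (with $Z=\tilde H_5 C^3_{23}-Y_1-(9,8,6)$), and it is this reduced system, together with the Schur-dimension constraint $\dim_4 Z=14760-11520=3240$, that the solver resolves uniquely. The four-row nature of $\tilde H_5 C^3_{23}$ is thus an \emph{output} of the computation, not an input; your shortcut replaces this with a premise that the data do not support.
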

\begin{proof} Applying the equivariant long exact sequence (\ref{equiv}) with $|A| = 23$ we have exact sequences 
$$0 \to \Res \tilde H_i C_{23}^3 \to 0$$
for $i\neq 5,6$, and an exact sequence
\begin{align}
  \label{Eq1}
  \begin{split}
    0 \to \tilde H_6C_{22}^3 \to \Res \tilde H_6C_{23}^3 &\to \Ind \tilde H_5C_{20}^3 \to \tilde H_5C_{22}^3\to\\
    &\to \Res \tilde H_5C_{23}^3 \to \Ind \tilde H_4C_{20}^3 \to 0.
  \end{split}
\end{align}

Therefore, $\tilde H_iC^3_{23} = 0$ for $i\neq 5,6$. Moreover, by Proposition \ref{C20}, $(\Ind \tilde H_5C_{20}^3)_3 = 0$. Therefore, we have an exact sequence
\begin{equation}\label{Eq2}
0 \to (\tilde H_5C_{22}^3)_3 \to (\Res \tilde H_5C_{23}^3)_3 \to (\Ind \tilde H_4C_{20}^3)_3 \to 0.
\end{equation}
By Proposition \ref{Prop2.1}, we know that 
$$(\tilde H_5C_{23}^3)_3 = (9,8,6).$$
Let $Y$ be the direct sum of irreducible representations whose corresponding partitions have $4$ rows in $\tilde H_5C_{23}^3$. Then from exact sequence (\ref{Eq2}), we have
\begin{align*}
(\Res Y)_3 &= (\tilde H_5C_{22}^3)_3 + (\Ind \tilde H_5C_{20}^3)_3 - \Res (9,8,6)\\
	& = (8, 8, 6) \oplus (9, 7, 6) \oplus (9, 8, 5)\oplus (10, 6, 6)\oplus (10, 8, 4).
\end{align*}
Therefore, 
$$Y_1  = (8, 8, 6,1) \oplus (9, 7, 6,1) \oplus (9, 8, 5,1)\oplus (10, 6, 6,1)\oplus (10, 8, 4,1)$$
is a subrepresentation of $Y$. By Proposition \ref{C20}, $\tilde H_4C_{20}^3 = (8,8,4) \oplus (8,6,6)$, thus $\Res Y_1 \oplus \Res (9,8,6) $ maps surjectively onto $\Ind \tilde H_4C_{20}^3$. Let 
$$D = \tilde H_5C_{22}^3 + \Ind \tilde H_4C_{20}^3 - \Res Y_1 - \Res (9,8,6).$$
Then the exact sequence (\ref{Eq1}) becomes 
\begin{equation}\label{Eq3}
0 \to \tilde H_6C_{22}^3 \to \Res \tilde H_6C_{23}^3 \to \Ind \tilde H_5C_{20}^3 \to D \to \Res Z \to 0
\end{equation}
where $Z = \tilde H_5C_{23}^3 - Y_1 - (9,8,6)$. 

By Proposition \ref{computation2}, $\tilde H_6C_{22}^3$ contains only irreducible representations whose corresponding partitions have $8$ rows while $\tilde H_5C_{22}^3$ contains only irreducible representations whose corresponding partitions have at most $6$ rows. Thus we have 
$$0 \to \tilde H_6C_{22}^3 \to (\Res \tilde H_6C_{23}^3)^6 \to (\Ind \tilde H_5C_{20}^3)^6\to 0$$
where $(\Res \tilde H_6C_{23}^3)^6  = \Res \tilde H_6C_{23}^3 - (\Res \tilde H_6C_{23}^3)_6$ and $(\Ind \tilde H_5C_{20}^3)^6 =\Ind \tilde H_5C_{20}^3  - (\Ind \tilde H_5C_{20}^3)_6.$ This exact sequence determines the irreducible representations of $\tilde H_6C_{23}^3$ whose corresponding partitions have $7$ or $8$ rows as given in our package MatchingComplex.m2. Let 
$$C = \Ind \tilde H_5C_{20}^3 + \tilde H_6C_{22}^3 - \Res (\tilde H_6C_{23}^3)^6,$$
where $(\tilde H_6C_{23}^3)^6$ is the direct sum of irreducible representations in $\tilde H_6C_{23}^3$ whose corresponding partitions have $7$ and $8$ rows described above. Then the exact sequence (\ref{Eq3}) becomes
\begin{equation}\label{Eq4}
0 \to \Res (\tilde H_6C_{23}^3)_6 \to C \to D \to \Res Z \to 0.
\end{equation}
Finally, using Macaulay2 to compute the dimensions of minimal linear syzygies of the module $\oplus_{k=0}^\infty \Sym^{3k+2} (V)$ with $\dim V = 4$, we get $\dim K_{7,0}(V,2) = 5400$. Moreover, $\tilde H_4C_{23}^3 = 0$, thus by Theorem \ref{trans}, $K_{5,2}(V,2) = 0$. Since $\oplus_{k=0}^\infty \Sym^{3k+2} (V)$ is a Cohen-Macaulay module of codimension $16$ with $h$-vector $(10,16,1)$, 
$$\dim K_{6,1}(V,2) = 10 \cdot \binom{16}{7} - 16\cdot \binom{16}{6} + \binom{16}{5} - 5400 = 14760.$$
Since the sum of dimensions of irreducible representations $S_\l(V)$ corresponding to partitions $\l$ in $Y_1\oplus(9,8,6)$ is equal to $11520$, the sum of dimensions of irreducible representations $S_\l(V)$ corresponding to partitions $\l$ in $Z$ is $14760 - 11520 = 3240.$ This fact and exact sequence (\ref{Eq4}) determine $\tilde H_5C_{23}^3$ as stated in the proposition and $\tilde H_6C_{23}^3$ as given in our package MatchingComplex.m2.
\end{proof}

\begin{Theorem}\label{main} The third Veronese embeddings of projective spaces satisfy property $N_6$.
\end{Theorem}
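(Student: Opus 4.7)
The plan is to translate the property $N_6$ for $\nu_3(\PP(V))$ into a list of vanishing statements for reduced homology of matching complexes $C^3_N$ via Theorem \ref{trans}, read these vanishings off the inductive computations of Sections \ref{Sec1} and 3, and handle the resulting infinite tail by a connectivity bound. Concretely, the $N_6$ property asserts that $K_{p,q}^3(V,0) = 0$ for every vector space $V$, every $p \le 6$, and every $q \ge 2$. Since $K_{p,q}^3(0)$ is a polynomial functor decomposing as a direct sum of Schur functors, the vanishing for all $V$ is equivalent to the absence of every irreducible $S_N$-isotype from $\tilde H_{p-1}(C^3_N)$ with $N = 3(p+q)$, by Theorem \ref{trans} applied with $d=3$ and $b=0$. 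Writing $r = p-1$ and $m = p+q$, what must be proved is $\tilde H_r(C^3_{3m}) = 0$ for every $0 \le r \le 5$ and every $m \ge r + 3$.

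I would then split this requirement into three ranges. For $3m \le 23$, corresponding to $N \in \{9, 12, 15, 18, 21\}$, the desired vanishings are immediate from Propositions \ref{Prop1.1} through \ref{C23}: each of them lists the complete set of non-zero $\tilde H_j(C^3_n)$ for the relevant $n$, and in every case the specific group $\tilde H_r(C^3_{3m})$ demanded of us fails to appear in that list. For $N = 3m \ge 27$, I would invoke the well-known connectivity bound for $d$-matching complexes (Bj\"orner--Wachs, Athanasiadis, and others), which ensures that $C^3_N$ is at least $5$-connected once $N \ge 27$ and thus forces $\tilde H_r(C^3_N) = 0$ for all $r \le 5$ in that range. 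Together these two ranges leave exactly one value of $N$ untreated, namely $N = 24$: the lower degrees $r \le 4$ here are still taken care of by the connectivity bound (which gives $4$-connectedness at $N = 24$), so only the top case $\tilde H_5(C^3_{24}) = 0$ remains.

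The main obstacle is therefore this endpoint case $(r,m) = (5,8)$. The equivariant long exact sequence (\ref{equiv}) applied with $|A| = 24$ does not determine the top homology on its own, exactly as happened in Propositions \ref{C20} and \ref{C23}. I would resolve it in the style of the proof of Proposition \ref{C23}: first use Proposition \ref{duality3} together with the observation that $C^3_3$ is a single point (so $\tilde H_\ast (C^3_3) = 0$) to eliminate every at-most-three-row partition from $\tilde H_5(C^3_{24})$; then feed in a direct Macaulay2 calculation of $\dim K_{p,0}^3(V,0)$ for small $p$ and for $\dim V$ just large enough to detect the four- and five-row Schur components; finally invoke the Cohen--Macaulay Hilbert-series identity for $\Ver(V,3)$ to convert this dimension datum into a constraint on the remaining multiplicities. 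The resulting system of equalities and inequalities for the unknown multiplicities (solved as in the MatchingComplex.m2 package) is tight enough to force $\tilde H_5(C^3_{24}) = 0$, which completes the proof of the main theorem.
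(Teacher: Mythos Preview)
Your overall strategy is sound, and the reduction to the single case $N=24$, $r=5$ is correctly identified, but your treatment of that endpoint diverges from the paper and contains a concrete error.

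A minor point first: the infinite tail $N\ge 27$ does not need a separate connectivity bound. The equivalence between Conjecture~1 and Conjecture~3 that the paper records rests on the classical fact that $K^d_{p,q}(V,0)=0$ for every $q\ge 3$ (the Veronese ring has Castelnuovo--Mumford regularity at most $2$), so only $q=2$ matters and hence only $N=3(p+2)\le 24$ is in play. Your connectivity argument is a legitimate substitute, but it is not what the paper invokes.

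The real gap is at $N=24$. You claim that the long exact sequence ``does not determine the top homology on its own, exactly as happened in Propositions~\ref{C20} and~\ref{C23},'' and propose to feed in $\dim K^3_{p,0}(V,0)$ from Macaulay2. But $K^3_{p,0}(V,0)=0$ for every $p\ge 1$ (this merely says $\Ver(V,3)$ is generated in degree $0$), so that number carries no information; to imitate Propositions~\ref{C20} and~\ref{C23} you would instead have to compute $\dim K^3_{6,2}(V,0)$ for $\dim V=4$ and $5$ directly, a far heavier calculation than anything in the paper. More importantly, the paper bypasses any such computation. The long exact sequence with $|A|=24$ yields a surjection
\[
\tilde H_5 C^3_{23}\ \twoheadrightarrow\ \Res \tilde H_5 C^3_{24},
\]
and by Proposition~\ref{C23} every irreducible constituent of $\tilde H_5 C^3_{23}$ has at most four rows. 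On the other hand, Ottaviani and Paoletti already proved in \cite{OP} that $\nu_3(\PP^3)$ satisfies $N_6$; via Theorem~\ref{trans} this says $\tilde H_5 C^3_{24}$ contains no constituent with at most four rows. These two facts force $\tilde H_5 C^3_{24}=0$ instantly, with no further Macaulay2 input. Your duality observation only kills the $\le 3$-row part, leaving the four-row constituents unresolved; the paper's use of the Ottaviani--Paoletti result for $\PP^3$ is precisely what closes that remaining gap without computation.
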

\begin{proof} It remains to prove that the only non-zero homology groups of the matching complex $C^3_{24}$ is $\tilde H_6C_{24}^3$. Applying the equivariant long exact sequence (\ref{equiv}) with $|A| = 24$ we have exact sequences
$$0 \to \Res \tilde H_i C_{24}^3 \to 0$$
for $i\neq 5,6$, and an exact sequence
$$0 \to \tilde H_6C_{23}^3 \to \Res \tilde H_6C_{24}^3 \to \Ind \tilde H_5C_{21}^3 \to \tilde H_5C_{23}^3 \to \Res \tilde H_5C_{24}^3 \to 0.$$
Therefore, $\tilde H_iC_{24}^3 = 0$ for $i\neq 5,6$ and $\tilde H_5C_{23}^3$ maps surjectively onto $\Res \tilde H_5C_{24}^3$. Moreover, by the result of Ottaviani-Paoletti \cite{OP}, the third Veronese embedding of $\PP^3$ satisfies property $N_6$. By Theorem \ref{trans}, $\tilde H_5C_{24}^3$ does not contain any irreducible representations whose corresponding partitions have at most $4$ rows. By Proposition \ref{C23}, $\tilde H_5C_{23}^3$ contains only irreducible representations whose corresponding partitions have at most $4$ rows, thus $\tilde H_5C_{24}^3$ is zero.
\end{proof}
\section*{Appendix} 
In this appendix we explain the ideas behind our Macaulay2 package MatchingComplex.m2. To determine the homology groups of $C^3_N$ inductively we use the equivariant long exact sequence (\ref{equiv}) with $|A| = N$ to determine two representations $B$ and $C$ of $S_{N-1}$ satisfying the following property. $B$ is a subrepresentation of $\Res \tilde H_iC^3_N$ and $C$ is a superrepresentation of $\Res \tilde H_iC^3_N$. To determine $\tilde H_iC^3_N$, we need to find a representation $X$ of $S_N$ satifying the property that $\Res X$ is a subrepresentation of $C$ and a superrepresentation of $B$. The function findEquation in our package will first determine a list $D$ of all possible partition $\l$ of $N$ such that $\Res \l$ is a subrepresentation of $C$. Let $X = \sum_{\l \in D} x_\l \cdot \l$ where $x_\l \ge 0$ is the multiplicity of the partition $\l\in D$ that we need to determine. Restricting $X$ we have inequalities (obtained by calling findEquation B and findEquation C) expressing the fact that $\Res \tilde H_iC^3_N$ is a subrepresentation of $C$ and a superrepresentation of $B$. We then use maple to solve for non-negative integer solutions of this system of inequalities.

\section*{Acknowledgements}
I would like to thank Claudiu Raicu for introducing the problem and its relation to the homology of matching complexes and my advisor David Eisenbud for useful conversations and comments on earlier drafts of the paper.

\bibliographystyle{amsalpha}

\end{document}